\tikzset{
    >=stealth',
    pil/.style={
           ->,
           thick,
           shorten <=2pt,
           shorten >=2pt,}
}
\tikzset{->-/.style={decoration={
  markings,
  mark=at position .5 with {\arrow{>}}},postaction={decorate}}}
\tikzset{-<-/.style={decoration={
  markings,
  mark=at position .5 with {\arrow{<}}},postaction={decorate}}}  
\titleformat{\section}{\large\bfseries\filcenter}{\thesection}{1em}{}
\titleformat{\subsection}[runin]{\bfseries}{\thesubsection.}{1em}{}[]
\newlist{longenum}{enumerate}{5}
\setlist[longenum,1]{label=\roman*)}
\setlist[longenum,2]{label=\alph*)}
\theoremstyle{plain}
\newtheorem{lemma}{Lemma}[section]
\newtheorem{proposition}[lemma]{Proposition}
\newtheorem{statement}[lemma]{Proposition}
\newtheorem{theorem}[lemma]{Theorem}
\newtheorem{corollary}[lemma]{Corollary}
\newtheorem{proposition-conjecture}[lemma]{Proposition-conjecture}
\theoremstyle{definition}
\newtheorem{definition}[lemma]{Definition}
\newtheorem{example}[lemma]{Example}
\newtheorem{remark}[lemma]{Remark}
\newtheorem{conjecture}[lemma]{Conjecture}
\newtheorem{problem}[lemma]{Problem}
\newcommand{\Ker}[1]{\mathrm{Ker} \, #1}
\newcommand{\codim}[1]{\mathrm{codim} \, #1}
\newcommand{\Z}{\mathbb{Z}}
\newcommand{\lt}{{B}}
\newcommand{\R}{\mathbb{R}}
\newcommand{\Complex}{\mathbb{C}}
\newcommand{\Id}{\mathrm{Id}}
\newcommand{\CP}{{\mathbb{C}}\mathrm{P}}
\newcommand{\Jac}{\mathrm{Jac}}
\newcommand{\Div}{\mathrm{Div}}
\newcommand{\BDiv}{\mathrm{InDeg}}
\newcommand{\GL}{\mathrm{GL}}
\newcommand{\PGL}{\mathbb{P}\mathrm{GL}}
\newcommand{\E}{\mathrm{Id}}
\newcommand{\gl}{\mathfrak{gl}}
\newcommand{\poly}{\pazocal P^\lt_{m,n}}
\newcommand{\glnc}{\mathrm{Mat}_n(\Complex)}
\newcommand{\indeg}{\mathrm{indeg}}
\newcommand{\Newton}{\mathrm{Newton}}
\newcommand{\diag}{\mathrm{diag}}
\newcommand{\mult}{\mathrm{mult}}
\newcommand{\modspace}[3]{\pazocal M_{#3}}
\newcommand{\Str}{\pazocal S}
\newcommand{\eqdef}{\mathrel{\mathop:}=}
\renewcommand{\deg}{\mathrm{deg}\,}
\DeclareMathAlphabet{\pazocal}{OMS}{zplm}{m}{n}
\title{Matrix polynomials, generalized Jacobians, \\and graphical zonotopes}
\author{ Anton Izosimov\footnote{Department of Mathematics,
University of Toronto, Toronto, ON M5S 2E4, Canada; e-mail: izosimov@math.utoronto.ca}
}
\date{ }
\begin{document}
\maketitle
\begin{abstract}
A {matrix polynomial} is a polynomial in a complex variable $\lambda$ with coefficients in $n \times n$ complex matrices. The spectral curve of a matrix polynomial $P(\lambda)$ is the curve $\{ (\lambda, \mu) \in \Complex^2 \mid \det(P(\lambda) - \mu \cdot \E) = 0\}$. The set of matrix polynomials with a given spectral curve $C$ is known to be closely related to the Jacobian of $C$, provided that $C$ is smooth. We extend this result to the case when $C$ is an arbitrary nodal, possibly reducible, curve. In the latter case the set of matrix polynomials with spectral curve $C$ turns out to be naturally stratified into smooth pieces, each one being an open subset in a certain generalized Jacobian. We give a description of this stratification in terms of purely combinatorial data and describe the adjacency of strata. We also make a conjecture on the relation between completely reducible matrix polynomials and the canonical compactified Jacobian defined by V.\,Alexeev.



\end{abstract}
\tableofcontents
\section{Introduction}
The study of isospectral matrix polynomials lies at the crossroads of integrable systems and algebraic geometry. An $n \times n$ \textit{matrix polynomial} of degree $m$ is an expression of the form $P(\lambda)= A_0 + A_1\lambda + \dots + A_m \lambda^m$ where $A_0, \dots, A_m \in \mathrm{Mat}_n(\Complex)$ are complex $n \times n$ matrices, and $\lambda$ is a complex variable. It is well known that there is a close relation between matrix polynomials and line bundles over plane curves. With a given matrix polynomial $P(\lambda)$, one can associate a plane algebraic curve $\{ (\lambda, \mu) \in \Complex^2 \mid \det(P(\lambda) - \mu \cdot \E) = 0\}$, called the \textit{spectral curve},  and a line bundle over the spectral curve given by eigenvectors of $P(\lambda)$. Conversely, given a plane algebraic curve $C$ and a line bundle $E \to C$ satisfying certain genericity assumptions, one can construct a matrix polynomial whose spectral spectral curve is $C$ and whose eigenvector bundle is isomorphic to $E$.\par

%
%
\par


When the spectral curve is smooth, the relation between matrix polynomials and line bundles is described by the following classical result. Fix positive integers $m$ and $n$, and let $\modspace{m}{n}{C}$ be the moduli space of $n \times n$ degree $m$ matrix polynomials with spectral curve $C$, considered up to conjugation by constant matrices (for simplicity of notation, we omit the dependency of $\pazocal M_C$ on $m$ and $n$; note that the dimension $n$ can always be found from the equation of the spectral curve $C$).
Then, if the curve $C$ is smooth and satisfies certain conditions at infinity, the moduli space $\modspace{m}{n}{C}$ is a smooth variety biholomorphic to the complement of the theta divisor in the Jacobian of~$C$. For hyperelliptic curves, this result is due to D.\,Mumford. In~\cite{Mumford}, Mumford used the space $\modspace{m}{n}{C}$ to give an explicit algebraic construction of the hyperelliptic Jacobian. For general smooth curves $C$, the space $\modspace{m}{n}{C}$ was studied by P.\,Van Moerbeke and D.\,Mumford \nolinebreak\cite{mvm} in connection with periodic difference operators, and by M.\,Adler and P.\,Van Moerbeke \nolinebreak\cite{adler}, A.G.\,Reyman and M.A.\,Semenov-Tian-Shansky~\cite{Reyman2}, and A.\,Beauville \nolinebreak\cite{beauville2} in the context of finite-dimensional integrable systems. It is worth mentioning that the technique used in these papers is to a large extent based on earlier works of S.P.\,Novikov's school on infinite-dimensional integrable systems, see, e.g., the review \cite{DKN}.
\par
In the present paper, we are interested in generalizing the relation between isospectral matrix polynomials and Jacobian varieties to singular curves. It is known that for certain singular curves the space $\modspace{m}{n}{C}$ is a variety isomorphic to the complement of the theta divisor in the compactification of the generalized Jacobian of $C$, see e.g. \cite{inoue}. However, in general, the space $\modspace{m}{n}{C}$ associated with a singular curve may even fail to be Hausdorff. For this reason, we replace the space $\modspace{m}{n}{C}$ with a slightly bigger space $\pazocal P^B_C$ defined as the set of $n \times n$ degree $m$ matrix polynomials whose spectral curve is a given curve $C$ and whose leading coefficient $A_m$ is equal to a fixed matrix $B$. 
 The latter space is, by definition, an affine algebraic variety for any curve $C$ and any matrix $B$ (by an algebraic variety we always mean an algebraic set, i.e. we allow varieties to be reducible). Note that the space $\modspace{m}{n}{C}$ can be recovered from  $\pazocal P^{B}_C$ by taking the quotient of the latter with respect to the conjugation action of the centralizer of $B$, provided that this quotient is well-defined.\par
For a smooth curve $C$, the set $\pazocal P^B_C$ was explicitly described by L.\,Gavrilov \cite{Gavrilov2}. Namely, he showed that if $C$ is smooth, then $\pazocal P^B_C$ can be naturally identified with an open dense subset in the generalized Jacobian of the curve obtained from $C$ by identifying points at infinity. We note that the latter Jacobian has a natural fiber bundle structure over the Jacobian of $C$, which agrees with the above description of the moduli space $\modspace{m}{n}{C}$ as the quotient of $\pazocal P^B_C$.\par
In this present paper, we consider the case of an arbitrary nodal and possibly reducible curve~$C$.
In this case, the isospectral set $\pazocal P^B_C$ turns out to have a rich combinatorial structure. This structure may be described in terms of orientations on the dual graph of $C$, or in terms of a certain convex polytope, the so-called {graphical zonotope} \cite{postnikov} associated with the dual graph. Based on this combinatorial description, we make a conjecture on the relation of the set $\pazocal P^B_C$ to the canonical compactified Jacobian of $C$ described by V.\,Alexeev \cite{Alexeev}.\par

The interest in varieties $\pazocal P^B_C$ associated with singular curves also comes from integrable systems. 
There is a natural algebraically integrable system on matrix polynomials of the form $ A_0 + L_1 \lambda + \dots + A_{m-1}\lambda^{m-1} + B\lambda^m$ whose fibers are precisely the sets $\pazocal P^B_C$, see~\cite{Reyman2, beauville2, Gavrilov2}. So, the results of the present paper can be regarded as a description of singular fibers for the integrable system on matrix polynomials. 
We also hope that these results will be useful on their own, in particular, for understanding the structure of compactified Jacobians.\par
 Main results of the paper are Theorem \ref{thm1} and Theorem \ref{thm3}. Theorem \ref{thm1} describes the variety $\pazocal P^B_C$ associated with a nodal curve $C$ as a disjoint union of smooth strata, each one being an open subset in a certain generalized Jacobian. Theorem \ref{thm3} describes the adjacency of these strata. Two other results are Theorem \ref{thm2}, which describes the local structure of the variety $\pazocal P^B_C$, and Theorem \ref{thm4}, which characterizes strata  of $\pazocal P^B_C$ corresponding to irreducible and completely reducible matrix polynomials. Based on Theorem \ref{thm4}, we make a conjecture on the relation of the variety $\pazocal P^B_C$ to the canonical compactified Jacobian of $C$ (Conjecture \ref{conjCCJ}).\par
 The main emphasis of the present paper is on constructions and examples. Detailed proofs will be published elsewhere. \par
\bigskip

{\bf Acknowledgments.}
This work was partially supported by the Dynasty Foundation Scholarship and an NSERC research grant.

\section{Statement of the problem}\label{mainProblem}
Fix positive integers $m,n$, and let
\begin{equation*}\pazocal P_{m,n} \eqdef \left\{  P(\lambda) = A_0 + A_1 \lambda + \dots + A_{m-1}\lambda^{m-1} + A_m\lambda^m \mid A_0, \dots, A_{m} \in \glnc \right\} 
\end{equation*} 
be the space of $n \times n$ degree $m$ matrix polynomials. For each $P \in \pazocal P_{m,n}$, let
 \begin{align*}
C_P\eqdef  \{ (\lambda, \mu) \in \Complex^2 \mid\det(P(\lambda)-\mu \cdot \E) = 0\}\end{align*}
 be the zero set of its characteristic polynomial. The curve $C_P$ is called the \textit{spectral curve} associated with a matrix polynomial $P$. This curve can be regarded as the graph of the spectrum of $P$ depending on the value of $\lambda$. Further, fix a matrix $B \in \glnc$ with distinct eigenvalues, and let
$$
\poly = \{ P(\lambda) = A_0 + A_1 \lambda + \dots + A_{m-1}\lambda^{m-1} + A_m\lambda^m \in \pazocal P_{m,n} \mid A_m = B \}
$$
be the space of matrix polynomials $P \in \pazocal P_{m,n}$ with leading coefficient $B$.
 The isospectral variety $\pazocal P^B_C$ corresponding to a plane curve $C$ is defined as
\begin{align*}
\pazocal P^B_C \eqdef   \{P \in \poly \mid  C_P = C \}.
\end{align*}
  The main problem of the present paper is the following:
  \begin{problem}\label{problem1}
   Describe the structure of the set $\pazocal P^B_C$ for an arbitrary nodal, possibly reducible, curve $C$. 
   \end{problem}
 The solution of Problem \ref{problem1} is given by Theorems \ref{thm1}, \ref{thm2}, and \ref{thm3} below. We begin with several preliminary remarks.
 First, note that if the matrices $B$ and $B'$ are similar, then the corresponding isospectral varieties $\pazocal P^{B}_C$ and $\pazocal P^{B'}_C$ are naturally isomorphic. For this reason, we may confine ourselves to the case of a diagonal matrix $B$ (recall that eigenvalues of $B$ are pairwise distinct).\par

Further, note that the choice of $m$, $n$, and $B$ imposes restrictions on curves which may arise as spectral curves. Indeed, let $C$ be a curve given by $Q(\lambda, \mu) = 0$. Assume that  $\pazocal P^B_C $ is non-empty, and let $$P(\lambda) = A_0 + A_1 \lambda + \dots + A_{m-1}\lambda^{m-1} + B\lambda^m \in  \pazocal P^B_C. $$ Introduce new variables $z,w$ such that $\lambda = 1/z$, and $\mu = w/z^m$. Then
\begin{align}
\begin{aligned}\label{specCond}\lim_{z \to 0}\left(Q \left({\frac{1}{z}}, \frac{w}{z^{m}}\right)z^{nm} \right) =  \lim_{z \to 0}\left(\det\left(P - \frac{w}{z^m}\cdot\Id\right)z^{mn}\right) = \\
\lim_{z \to 0}\vphantom{\int}\det(B + zA_{m-1} + \dots + z^m A_0 - w\cdot\E) &=
 \det(B- w\cdot\E).
 \end{aligned}
 \end{align}
Geometrically, this condition means that the Newton polygon of the polynomial $Q(\lambda, \mu)$ lies inside the triangle with vertices $(0,0)$, $(0,n)$, $(mn, 0)$, and that coefficients of the monomials lying on the hypothenuse of this triangle are equal to corresponding coefficients of the characteristic polynomial of the matrix $B$ (see Figure~\ref{newton}). In what follows, we consider only those curves $C$ which satisfy this condition. Note that for smooth and nodal curves, the above condition turns out to be not only necessary, but also a sufficient condition for a curve $C$ to be the spectral curve for a suitable matrix polynomial $P$. Also note that for matrix polynomials of degree one, the above condition simply means that the curve $C$ has degree $n$, and that its closure in \nolinebreak $\CP^2$ intersects the line at infinity at specific points prescribed by the eigenvalues of the matrix $B$. 
    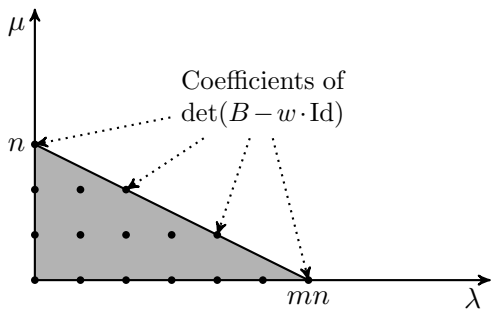
\begin{figure}[t]
\centerline{
\begin{tikzpicture}[thick,scale = 1.2]
   		 \draw  [->] (0,0) --(0,3);
		  \draw  [->] (0,0) --(5,0);
		  \draw (3,0) -- (0,1.5);
		  \fill[opacity = 0.3] (0,0) -- (3,0) -- (0,1.5) -- cycle;
		  \fill (0,0) circle (1.2pt);
		    \fill (0,0.5) circle (1.2pt);
		      \fill (0,1) circle (1.2pt);
		        \fill (0,1.5) circle (1.2pt);
		        		  \fill (0.5,0) circle (1.2pt);
		    \fill (0.5,0.5) circle (1.2pt);
		      \fill (0.5,1) circle (1.2pt);		       
		      		        		  \fill (1,0) circle (1.2pt);
		    \fill (1,0.5) circle (1.2pt);
		      \fill (1,1) circle (1.2pt);
		      		        		  \fill (1.5,0) circle (1.2pt);
		    \fill (1.5,0.5) circle (1.2pt);
		      	 		      		        		  \fill (2,0) circle (1.2pt);
		    \fill (2,0.5) circle (1.2pt);
		    		      		        		  \fill (2.5,0) circle (1.2pt);
		    \fill (3,0) circle (1.2pt);
		              \node () at (-0.2, 1.5) {$n$};
												    		    \node () at (3, -0.2) {$mn$};
														    		    \node () at (-0.2, 2.8) {$\mu$};
																    \node () at (4.8, -0.2) {$\lambda$};   
																    \node (A) at (2.5,2) {\small \parbox{2.15cm}{Coefficients of $\det(B - w \cdot \E)$}};
																    \draw [->, dotted] (A) -- (0,1.5);
																       \draw [->, dotted] (A) -- (1,1);
																          \draw [->, dotted] (A) -- (2,0.5);
																             \draw [->, dotted] (A) -- (3,0);
\end{tikzpicture}
}
\caption{Newton polygon of a spectral curve}\label{newton}
\end{figure}

Finally, we note that there is a natural action of the group   \begin{align*}G^\lt \eqdef \{ U \in \PGL_n(\Complex) \mid U\lt = \lt U \}\end{align*} on $\pazocal P^B_C$ by conjugation. Provided that $\pazocal P^B_C$ is non-empty, the quotient $\pazocal P^B_C / G^B$ coincides with the moduli space $$\modspace{m}{n}{C} = \{P \in \pazocal P_{m,n} \mid C_P = C  \} \,/\,\GL_n(\Complex),$$
where the group $\GL_n(\Complex)$ acts by conjugation. Indeed, there is a natural map
$
\pi \colon \pazocal P^B_C \to \modspace{m}{n}{C}
$
that sends every $P \in \pazocal P^B_C$ to its conjugacy class. Obviously, fibers of this map are precisely $G^B$ orbits. Furthermore, since $ \pazocal P^B_C$ is non-empty, there \textit{exists} $P \in \modspace{m}{n}{C}$ such that its leading coefficient is conjugated to $B$. But this implies that this is so  \textit{for any} $P \in \modspace{m}{n}{C}$, because the eigenvalues of the leading coefficient of a matrix polynomial are uniquely determined by the spectral curve. Therefore, the map $\pi$ is surjective, and we have  $$\modspace{m}{n}{C} = \pazocal P^B_C / G^B. $$

\begin{example}\label{twoLines}
The following example shows that if the curve $C$ is reducible, then the space $\modspace{m}{n}{C}$ is, generally speaking, not a variety. Let $m =1$, $n=2$, and let $C$ be the union of two straight lines $\mu = a_1 + b_1 \lambda$ and $\mu = a_2 + b_2 \lambda$. To describe the space $\modspace{m}{n}{C}$, we first describe the variety $\pazocal P^B_C$ for $B$ diagonal with diagonal entries $b_1, b_2$. A simple explicit computation shows that $\pazocal P^B_C$ is a disjoint union of three $G^B$ orbits $\mathrm{Orb}_1 \sqcup \mathrm{Orb}_2 \sqcup \mathrm{Orb}_3$ where
\begin{align*}
&\qquad\qquad\qquad\qquad \mathrm{Orb}_1 = \left\{ \left(\begin{array}{cc}a_1 +  b_1 \lambda & 0 \\0 & a_2 +  b_2 \lambda \end{array}\right)  \right\},
\\  &\mathrm{Orb}_2 =  \left\{ \left(\begin{array}{cc}a_1 + b_1\lambda  &z \in \Complex^* \\0 & a_2 + b_2 \lambda \end{array}\right)  \right\}, \quad  \mathrm{Orb}_3 = \left\{ \left(\begin{array}{cc}a_1 + b_1 \lambda & 0 \\z \in \Complex^* & a_2 + b_2 \lambda\end{array}\right)   \right\}.
\end{align*}
Note that the closure of the orbit $\mathrm{Orb}_2$, as well as the closure of the orbit $\mathrm{Orb}_3$, contains the orbit $\mathrm{Orb}_1$, therefore the quotient $\modspace{m}{n}{C} = \pazocal P^B_C / G^B $ is non-Hausdorff.
\end{example}

%
  
  \section{Stratification of the isospectral variety}\label{genCons}
  Let $C$ be an arbitrary nodal, possibly reducible, curve. In this section, with each matrix polynomial $P \in  \pazocal P^B_C$, we associate a graph $\Gamma_P$ and a divisor $D_P$ on $\Gamma_P$, i.e. an integer-valued function on vertices of $\Gamma_P$. This construction leads to a stratification of  $\pazocal P^B_C$ with strata indexed by pairs (a graph $\Gamma$, a divisor $D$ on $\Gamma$) .\par  
   \tikzstyle{vertex} = [coordinate]
 \begin{figure}[t]
 \centering
\begin{tikzpicture}[thick]
\draw (0,0) -- (1.5,2);
\draw (2,0) -- (0.5,2);
\draw (-0.2,0.5) -- (2.2,0.5);
\draw[->, dashed] (1.8,1.2) -- (2.7,1.2);
\node () at (3.5,1) {
\begin{tikzpicture}[thick, rotate = 180]
 \draw   (0,0) -- (0.5, 0.86);
    \fill (0,0) circle [radius=1.5pt];
       \fill (1,0) circle [radius=1.5pt];
          \fill (0.5,0.86) circle [radius=1.5pt];
   		 \draw  (0,0) -- (1,0);    
 		   \draw    (0.5, 0.86) -- (1,0); 
		   \end{tikzpicture}
};
\node () at (7,1.2) {\includegraphics[scale = 1]{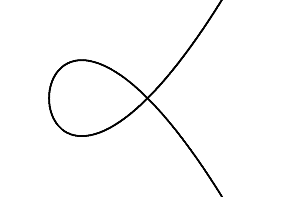}};
\draw[->, dashed] (7.8,1.2) -- (8.7,1.2);
\node () at (9.5,1) {
\begin{tikzpicture}[thick]
    \fill (0,0) circle [radius=1.5pt];
    \draw (0,0) .. controls +(-0.7,-0.7) and +(+0.7,-0.7) .. (0,0);
		   \end{tikzpicture}
};
\end{tikzpicture}
\caption{Nodal curves and their dual graphs}\label{curveGraph}
\end{figure}

Recall that \textit{dual graph} $\Gamma_C$ of a nodal curve $C$ is defined as follows. The vertices of $\Gamma_C$ are irreducible components of $C$. The edges of $\Gamma_C$ are nodes. Two vertices are joined by an edge if there is a node joining the corresponding irreducible components. Nodes which lie in only one irreducible component correspond to loops in $\Gamma_C$ (see e.g. Figure~\ref{curveGraph}).\par
  
  
  Now we associate a subgraph $\Gamma_P \subset \Gamma_C$ with each matrix polynomial in $P^B_C$. Let $P \in \pazocal P^B_C$. Then, by definition of the spectral curve, for any point $ (\lambda, \mu) \in C$ we have $\dim \Ker(P(\lambda) - \mu \cdot \Id) > 0$. Moreover, if $ (\lambda, \mu)$ is a smooth point of $C$, then $\dim \Ker(P(\lambda) - \mu \cdot \Id) = 1$, i.e. the $\mu$-eigenspace of the matrix $P(\lambda)$ is one-dimensional (see e.g.~\cite{babelon2003introduction}, Chapter~5.2). If, on the contrary, $ (\lambda, \mu) \in C$ is a double point, then there are two possibilities: either $\dim \Ker(P(\lambda) - \mu \cdot \Id) = 1$, or $\dim \Ker(P(\lambda) - \mu \cdot \Id) = 2$ (cf. Section~2 of~\cite{adams1990}). 
  Now, define $\Gamma_P \subset \Gamma_C$ as the graph obtained from the dual graph $\Gamma_C$ by removing all edges of  $\Gamma_C$ corresponding to nodes $(\lambda, \mu) \in C$ such that $\dim \Ker(P(\lambda) - \mu \cdot \Id) = 2$. Note that the subgraph $\Gamma_P \subset \Gamma_C$ is \textit{generating}, i.e. it has the same set of vertices as the graph $\Gamma_C$ itself.
  
  \begin{example}\label{twoLines2}
Let $m =1$, $n=2$, and let the curve $C$ be the union of two lines $C_1 = \{\mu = a_1 + b_1 \lambda\}$ and $C_2 =  \{\mu = a_2 + b_2 \lambda\}$, as in Example \ref{twoLines}. Let also $B$ be a diagonal matrix with diagonal entries $b_1$, $b_2$. Then we have $\pazocal P^B_C  = \mathrm{Orb}_1 \sqcup \mathrm{Orb}_2 \sqcup \mathrm{Orb}_3$ (see Example \ref{twoLines}). The only double point of the curve $C$ is $(\lambda_0, \mu_0) = C_1 \cap C_2$, and the dual graph $\Gamma_C$ is two vertices joined by an edge. For $P \in \mathrm{Orb}_1$, we have $\dim \Ker(P(\lambda_0) - \mu_0 \cdot \Id) = 2$, so the corresponding graph $\Gamma_P$ is two disjoint vertices. For $P \in \mathrm{Orb}_2$ or $P \in \mathrm{Orb}_3$, we have $\dim \Ker(P(\lambda_0) - \mu_0 \cdot \Id) = 1$, so $\Gamma_P = \Gamma_C$.   \end{example}
  As can be seen from Example \ref{twoLines2}, different components of  $\pazocal P^B_C$ may correspond to the same generating subgraph $\Gamma_P \subset \Gamma_C$. To distinguish between these components, with each matrix polynomial  $P \in \pazocal P^B_C$ we associate an additional invariant, a divisor $D_P$ on the graph $\Gamma_P$. This divisor is defined as follows.\par
  
 Let $C_1, \dots, C_k$ be the irreducible components of $C$. Consider the normalization $X$ of $C$, i.e. the disjoint union of connected Riemann surfaces $X_1, \dots, X_k$, where $X_i$ is the normalization of $C_i$. Note that $X$ is equipped with two meromorphic functions $\lambda$ and $\mu$ coming from the embedding of the curve $C$ into $\Complex^2$.  For all but finitely many points $u \in X$, we have $\dim \Ker(P(\lambda(u)) - \mu(u) \cdot \Id) = 1 $. This allows us to construct a densely defined map $\psi_P \colon X \to \CP^{n-1}$ which sends $u$ to the one-dimensional subspace $ \Ker(P(\lambda(u)) - \mu(u) \cdot \Id) \subset \Complex^n$. Since $X$ is smooth, the map $\psi_P$ extends to a holomorphic map defined on the whole $X$.
%
%
  Now, consider a line bundle over $X$ defined by taking the line $\psi_P(u)$ for every point $u \in X$. Denote this line bundle by $E_P$, and let $E_P^*$ be the dual bundle. The total degree of the bundle $E_P^*$ can be found in the same way as in the smooth case (see e.g. \cite{babelon2003introduction}, Chapter 5.2): \begin{align}
  \label{totalDegreeFormula}
  \sum_{i=1}^k \deg E_P^*\mid_{X_i} \,= \sum_{i=1}^k \,(\mathrm{genus}(X_i) - 1) +  |E(\Gamma_P)| + n \end{align} where $|E(\Gamma_P)|$ stands for the number of edges of $\Gamma_P$, or, equivalently, the number of nodes $(\lambda, \mu) \in C$ such that $\dim \Ker(P(\lambda) - \mu \cdot \Id) = 1$. For smooth curves, this formula reduces to the standard one 
\begin{align}\label{smoothDegree}
   \deg E_P^*= \mathrm{genus}(X)  + n -1.
   \end{align}
  

    \begin{example}\label{twoLines3}
    The following example shows that while the {total} degree of the bundle $E_P^*$ can be computed in terms of the curve $C$ and the graph $\Gamma_P$, the degree of the restriction of $E_P^*$ to $X_i$ can be different for different matrix polynomials $P$ with the same spectral curve and the same graph. \par Let us again consider the two-lines spectral curve from Examples \ref{twoLines}, \ref{twoLines2}. The corresponding Riemann surface is $X = X_1 \sqcup X_2$ where $X_i$ is the closure of the line $C_i = \{\mu = a_i + b_i\lambda\}$ in $\CP^2$. Take a matrix polynomial $P \in \mathrm{Orb}_2$. Then for any point $u = (\lambda, \mu) \in C_1$, we have
    $$
    P - \mu \cdot \Id = \left(\begin{array}{cc}0 & z \\0 & (a_2 + b_2\lambda) - (a_1 + b_1\lambda)\end{array}\right),    $$
so $  \Ker(P - \mu \cdot \Id)$ is spanned by the vector $w =  (1,0)$, which implies that the restriction of the bundle $E_P$ to $X_1$ is a trivial bundle, and $\deg E_P^*\mid_{X_1} = 0$. Further, for $u = (\lambda, \mu) \in C_2$, we have
    $$
    P - \mu \cdot \Id = \left(\begin{array}{cc}(a_1 + b_1\lambda) - (a_2 + b_2\lambda)\ & z \\0 & 0\end{array}\right),    $$
    so $  \Ker(P - \mu \cdot \Id)$ is spanned by the vector $w = (z, (a_2 + b_2\lambda) - (a_1 + b_1\lambda))$. The vector-function $w$ can be regarded as a meromorphic section of the bundle $E_P$ over $X_2$ with one pole of order one at infinity, so $\deg E_P\mid_{X_2} = -1$, and $\deg E_P^*\mid_{X_2} = 1$. 
   Analogously, for $P \in \mathrm{Orb}_3$, we get $\deg E_P^*\mid_{X_1} = 1$, and $\deg E_P^*\mid_{X_2} = 0$. Note that in both cases  $P \in \mathrm{Orb}_2$ and  $P \in \mathrm{Orb}_3$, the total degree of the line bundle $ E_P^*$ is equal to one, as predicted by formula \eqref{totalDegreeFormula}.    \end{example}
   Now, we use the numbers $\deg E_P^*\mid_{X_i} $ to define a divisor $D_P$ on the graph $\Gamma_P$. Take any vertex $v_i$ of $\Gamma_P$. By definition, this vertex corresponds to an irreducible component $C_i$ of $C$ or, equivalently, to a connected component $X_i \subset X$. Define
           \begin{align}\label{dpdef}
     D_P(v_i):= \deg E^*_P\mid_{X_i}\!-\, (\mathrm{genus}(X_i) + n_i - 1)
         \end{align}
   where $n_i$ is the number of poles of the function $\lambda$ on $X_i$ or, equivalently, the degree of the equation of the irreducible component $C_i$ in the variable $\mu$. Note that for smooth curves the right-hand side of \eqref{dpdef} is zero (cf. formula \eqref{smoothDegree}), so the divisor $D_P$ measures the deviation of the degree of the line bundle $E^*_P\mid_{X_i}$ from the degree that it would have in the smooth case. 

 \begin{figure}[t]
\centerline{
\begin{tikzpicture}[thick,scale = 1.2]
 \node (a) at (0,0)
         {
            \begin{tikzpicture}[scale = 1.2]
            \node () at (-4,0) {$\left(\begin{array}{cc}a_1 +  b_1 \lambda & 0 \\0 & a_2 +  b_2 \lambda \end{array}\right)$};
            \fill (0,0) circle (1.2pt);
            \fill (1,0) circle (1.2pt);
   		 \draw  [dashed] (0,0) -- (1,0);    
		\node () at (-0.1,-0.2) {$0$};
		\node () at (1.1,-0.2) {$0$};	
				\draw [->, dashed] (-1.75,0) -- (-0.75,0);
            \end{tikzpicture}
         }; 
          \node (b) at (0,-1.5)
         {
            \begin{tikzpicture}[scale = 1.2]
            \node () at (-4,0) {$\left(\begin{array}{cc}a_1 +  b_1 \lambda & * \\0 & a_2 +  b_2 \lambda \end{array}\right)$};
            \fill (0,0) circle (1.2pt);
            \fill (1,0) circle (1.2pt);
   		 \draw  [] (0,0) -- (1,0);    
		\node () at (-0.1,-0.2) {$0$};
		\node () at (1.1,-0.2) {$1$};	
				\draw [->, dashed] (-1.75,0) -- (-0.75,0);
            \end{tikzpicture}
         }; 
                   \node (b) at (0,-3)
         {
            \begin{tikzpicture}[scale = 1.2]
            \node () at (-4,0) {$\left(\begin{array}{cc}a_1 +  b_1 \lambda & 0 \\ * & a_2 +  b_2 \lambda \end{array}\right)$};
            \fill (0,0) circle (1.2pt);
            \fill (1,0) circle (1.2pt);
   		 \draw  [] (0,0) -- (1,0);    
		\node () at (-0.1,-0.2) {$1$};
		\node () at (1.1,-0.2) {$0$};	
				\draw [->, dashed] (-1.75,0) -- (-0.75,0);
            \end{tikzpicture}
         }; 
\end{tikzpicture}
}
\caption{Graphs and divisors for matrix polynomials whose spectral curve is two straight lines}\label{diangle}
\end{figure}
  \begin{example}\label{twoLines4}
  Figure \ref{diangle} depicts the graph $\Gamma_P$ and the divisor $D_P$ for each of the components $\mathrm{Orb}_1$, $\mathrm{Orb}_2$,  and $\mathrm{Orb}_3$ of  the variety $\pazocal P^B_C$ considered in Examples \ref{twoLines}, \ref{twoLines2}, \ref{twoLines3}. Here and in what follows, dashed edges are those which do not belong to the subgraph $\Gamma_P$.
  \end{example}
  
  %

Now, for any generating subgraph $\Gamma$ of the dual graph of $C$, and for any divisor $D$ on $\Gamma$, define the corresponding stratum
  \begin{align*}
  \Str(\Gamma,D) := \{ P \in  \pazocal P^B_C \mid \Gamma_P = \Gamma, D_P = D\}.
\end{align*}
  \begin{example}\label{twoLines4_5}
  When $C$ is two lines (see Example \ref{twoLines} and Examples \ref{twoLines2} - \ref{twoLines4}), the variety $\pazocal P^B_C$ consists of three strata
  \begin{align*}\Str\left(
     \begin{tikzpicture}[baseline={([yshift=-.5ex]current bounding box.center)},vertex/.style={anchor=base,
    circle,fill=black!25,minimum size=18pt,inner sep=2pt}]
            \fill (0,0) circle (1.2pt);
            \fill (1,0) circle (1.2pt);
   		 \draw  [dashed] (0,0) -- (1,0);    
		\node () at (-0.2,-0.) {$0$};
		\node () at (1.2,-0.) {$0$};	
            \end{tikzpicture}
  \right) = \mathrm{Orb}_1, \quad
\Str\left(
     \begin{tikzpicture}[baseline={([yshift=-.5ex]current bounding box.center)},vertex/.style={anchor=base,
    circle,fill=black!25,minimum size=18pt,inner sep=2pt}]
            \fill (0,0) circle (1.2pt);
            \fill (1,0) circle (1.2pt);
   		 \draw  [] (0,0) -- (1,0);    
		\node () at (-0.2,-0.) {$0$};
		\node () at (1.2,-0.) {$1$};	
            \end{tikzpicture}
  \right) = \mathrm{Orb}_2, \quad \Str\left(
     \begin{tikzpicture}[baseline={([yshift=-.5ex]current bounding box.center)},vertex/.style={anchor=base,
    circle,fill=black!25,minimum size=18pt,inner sep=2pt}]
            \fill (0,0) circle (1.2pt);
            \fill (1,0) circle (1.2pt);
   		 \draw  [] (0,0) -- (1,0);    
		\node () at (-0.2,-0.) {$1$};
		\node () at (1.2,-0.) {$0$};	
            \end{tikzpicture}
  \right) =  \mathrm{Orb}_3.  \end{align*}
  \end{example}

Theorem \ref{thm1} below asserts that non-empty strata $\Str(\Gamma ,D)$ are smooth and connected, and, moreover, can be described as open subsets in semiabelian varieties. This result generalizes the description of $\pazocal P^B_C $ for smooth curves $C$ found by Gavrilov \cite{Gavrilov2}. 
   The rest of the paper is devoted to the description of those divisors $D$ for which $\Str(\Gamma ,D)$ is non-empty. We also describe the local structure of the variety $\pazocal P^B_C $ in the neighborhood of each stratum and characterize those strata which are adjacent to each other.

  \section{Combinatorial interlude I: Indegree divisors and graphical zonotopes}
   In what follows, a \textit{graph} is an undirected multigraph possibly with loops. The notation $V(\Gamma)$ stands for the vertex set of a graph $\Gamma$, and $E(\Gamma)$ stands for the edge set. By $\Div(\Gamma)$ we denote the set of all divisors on $\Gamma$. Each divisor $D \in \Div(\Gamma)$ may be regarded either as a formal integral linear combination of vertices of $\Gamma$, or as a $\Z$-valued function on vertices. The \textit{degree} $|D|$ of a divisor $D$ is the sum of its values over all vertices. 
 \par
  Let $\Gamma$ be a graph, and let $\pazocal O(\Gamma)$ be the set of all orientations on $\Gamma$. For each $\mathfrak{o} \in \pazocal O$, define a divisor \begin{align}\label{indegDiv}\indeg(\mathfrak o) :=\!\!\! \sum_{v\, \in\, V(\Gamma)} \!\indeg_{\mathfrak o}(v)v\end{align}
  where $\indeg_{\mathfrak o}(v)$ is the indegree of the vertex $v$ in the directed graph $(\Gamma, \mathfrak o)$, i.e. the number of edges pointing to $v$.
  \begin{definition}\label{balDef1}
  Let $\Gamma$ be a graph. We say that a divisor $D \in \Div(\Gamma)$ is an \textit{indegree divisor} if there exists an orientation $\mathfrak o \in \pazocal O(\Gamma)$ such that $D = \indeg(\mathfrak o)$.  We denote the set of all indegree divisors by $\BDiv(\Gamma)$.
    \end{definition}

 \begin{figure}[b]
\centerline{
\begin{tikzpicture}[thick,scale = 1.2]
 \node (a) at (0,0)
         {
            \begin{tikzpicture}[scale = 1.2]
   		 \draw  [->-] (0,0) -- (0.5, 0.86);
   		 \draw  [->-] (0,0) -- (1,0);    
 		   \draw  [->-]  (0.5, 0.86) -- (1,0); 
		\node () at (-0.1,-0.1) {$0$};
		\node () at (1.1,-0.1) {$2$};	
				\node () at (0.5,1.02) {$1$};	
            \end{tikzpicture}
         }; 
          \node (b) at (2,0)
         {
            \begin{tikzpicture}[scale = 1.2]
   		 \draw  [->-] (0,0) -- (0.5, 0.86);
   		 \draw  [->-] (0,0) -- (1,0);    
 		   \draw  [-<-]  (0.5, 0.86) -- (1,0); 
		\node () at (-0.1,-0.1) {$0$};
		\node () at (1.1,-0.1) {$1$};	
				\node () at (0.5,1.02) {$2$};	
            \end{tikzpicture}
         }; 
                   \node (c) at (4,0)
         {
            \begin{tikzpicture}[scale = 1.2]
   		 \draw  [->-] (0,0) -- (0.5, 0.86);
   		 \draw  [-<-] (0,0) -- (1,0);    
 		   \draw  [-<-]  (0.5, 0.86) -- (1,0); 
		\node () at (-0.1,-0.1) {$1$};
		\node () at (1.1,-0.1) {$0$};	
				\node () at (0.5,1.02) {$2$};	
            \end{tikzpicture}
         }; 
                            \node (d) at (6,0)
         {
            \begin{tikzpicture}[scale = 1.2]
   		 \draw  [-<-] (0,0) -- (0.5, 0.86);
   		 \draw  [-<-] (0,0) -- (1,0);    
 		   \draw  [-<-]  (0.5, 0.86) -- (1,0); 
		\node () at (-0.1,-0.1) {$2$};
		\node () at (1.1,-0.1) {$0$};	
				\node () at (0.5,1.02) {$1$};				
            \end{tikzpicture}
         }; 
                            \node (e) at (8,0)
         {
            \begin{tikzpicture}[scale = 1.2]
   		 \draw  [-<-] (0,0) -- (0.5, 0.86);
   		 \draw  [-<-] (0,0) -- (1,0);    
 		   \draw  [->-]  (0.5, 0.86) -- (1,0); 
		\node () at (-0.1,-0.1) {$2$};
		\node () at (1.1,-0.1) {$1$};	
				\node () at (0.5,1.02) {$0$};	
            \end{tikzpicture}
         }; 
                            \node (f) at (10,0)
         {
            \begin{tikzpicture}[scale = 1.2]
   		 \draw  [-<-] (0,0) -- (0.5, 0.86);
   		 \draw  [->-] (0,0) -- (1,0);    
 		   \draw  [->-]  (0.5, 0.86) -- (1,0); 
		\node () at (-0.1,-0.1) {$1$};
		\node () at (1.1,-0.1) {$2$};	
				\node () at (0.5,1.02) {$0$};	
            \end{tikzpicture}
         }; 
                            \node (g) at (4,-2)
         {
            \begin{tikzpicture}[scale = 1.2]
   		 \draw  [-<-] (0,0) -- (0.5, 0.86);
   		 \draw  [->-] (0,0) -- (1,0);    
 		   \draw  [-<-]  (0.5, 0.86) -- (1,0); 
		\node () at (-0.1,-0.1) {$1$};
		\node () at (1.1,-0.1) {$1$};	
				\node () at (0.5,1.02) {$1$};	
            \end{tikzpicture}
         }; 
                            \node (h) at (6,-2)
         {
            \begin{tikzpicture}[scale = 1.2]
   		 \draw  [->-] (0,0) -- (0.5, 0.86);
   		 \draw  [-<-] (0,0) -- (1,0);    
 		   \draw  [->-]  (0.5, 0.86) -- (1,0); 
		\node () at (-0.1,-0.1) {$1$};
		\node () at (1.1,-0.1) {$1$};	
				\node () at (0.5,1.02) {$1$};	
            \end{tikzpicture}
         }; 
\end{tikzpicture}
}
\caption{Eight orientations and seven indegree divisors on a triangle}\label{triangle}
\end{figure}
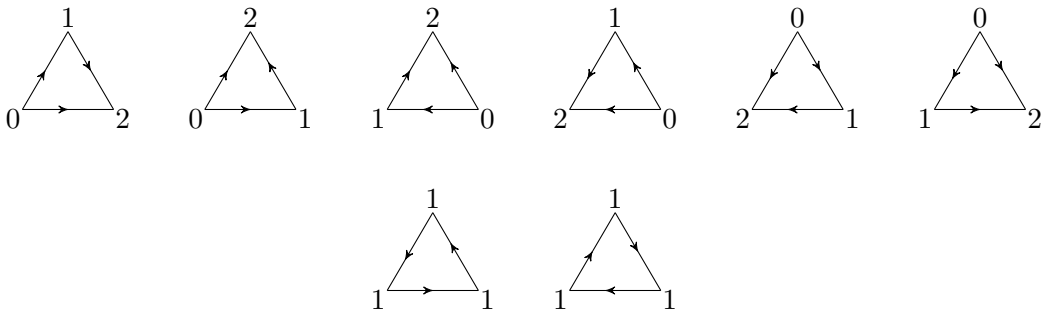
\begin{example}\label{triangleEx1}
Figure \ref{triangle} depicts $8$ possible orientations of a triangle and $7$ corresponding indegree divisors.
\end{example}
\begin{remark}
Divisors satisfying Definition \ref{balDef1} are also known in literature as score vectors~\cite{KW} and ordered outdegree sequences \cite{stanley1980decompositions} (note that there is no difference between indegree and outdegree divisors, since $\indeg({\mathfrak o}) = \mathrm{outdeg}({-{\mathfrak o}})$ where $-{\mathfrak o}$ is the orientation obtained from $\mathfrak o$ by inverting the direction of every edge). In algebraic geometry, divisors satisfying Definition~\ref{balDef1} are known as  \textit{normalized semistable multidegrees}. Such multidegrees were used by Beauville~\cite{Beauville} to describe the theta divisor for a nodal reducible curve. The term normalized semistable multidegree is due to Alexeev~\cite{Alexeev}. Let us also mention a recent paper~\cite{ABKS} where the authors consider so-called \textit{orientable} divisors. A divisor $D \in \Div(\Gamma)$ is called orientable if there exists an orientation $\mathfrak o \in \pazocal O(\Gamma)$  such that
$
D = \sum_{v} (\indeg_{\mathfrak o}(v) - 1)v.
$
Clearly, orientable divisors are in bijection with indegree divisors.
\end{remark}
The following result is well-known.
  \begin{proposition}\label{balDef2}
A divisor $D$ on a graph $\Gamma$ is {an indegree divisor} if and only if the degree of $D$ is equal to the number of edges of $\Gamma$, and for every subgraph $\Gamma_1 \subset \Gamma$, the degree of the restriction of $D$ to $\Gamma_1$ is greater or equal than the number of edges of $\Gamma_1$.
  \end{proposition}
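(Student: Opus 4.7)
The plan is to prove the two directions separately. The forward direction is a direct calculation, while the converse reduces to a bipartite matching argument via Hall's theorem. For necessity, suppose $D = \indeg(\mathfrak{o})$ for some orientation $\mathfrak{o} \in \pazocal O(\Gamma)$. Each edge of $\Gamma$ contributes $1$ to the indegree of exactly one of its endpoints, so summing over all vertices gives $|D| = |E(\Gamma)|$. For any subgraph $\Gamma_1 \subseteq \Gamma$ with vertex set $V_1$, every edge of $\Gamma_1$ has both endpoints in $V_1$ and hence contributes exactly $1$ to $\sum_{v \in V_1} \indeg_{\mathfrak{o}}(v) = \deg(D|_{\Gamma_1})$, while edges of $\Gamma \setminus \Gamma_1$ incident to $V_1$ may contribute additional indegree. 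Therefore $\deg(D|_{\Gamma_1}) \geq |E(\Gamma_1)|$.

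For sufficiency, I would recast the problem as a bipartite transportation problem. Form a bipartite multigraph $H$ with parts $E(\Gamma)$ (each edge-vertex carrying supply $1$) and $V(\Gamma)$ (each vertex $v$ having demand $D(v)$), where $e \in E(\Gamma)$ is joined in $H$ to each of its endpoints in $\Gamma$; a loop at $v$ is joined only to $v$, which forces such a loop to count once toward $D(v)$. An orientation $\mathfrak{o}$ with $\indeg(\mathfrak{o}) = D$ is then the same datum as an assignment $f \colon E(\Gamma) \to V(\Gamma)$ along $H$-edges with $|f^{-1}(v)| = D(v)$ for every $v$. By the defect form of Hall's theorem (or equivalently max-flow/min-cut on the obvious source-to-sink network associated with $H$), such an $f$ exists if and only if $\sum_v D(v) = |E(\Gamma)|$ and, for every $S \subseteq V(\Gamma)$, the number of edges of $\Gamma$ with at least one endpoint in $S$ is at least $\deg(D|_S)$.

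It remains to match this Hall-type cut inequality with the subgraph inequality in the statement. Set $\bar S = V(\Gamma) \setminus S$ and let $\Gamma_{\bar S}$ denote the induced subgraph on $\bar S$; the number of edges of $\Gamma$ with at least one endpoint in $S$ equals $|E(\Gamma)| - |E(\Gamma_{\bar S})|$, while $\deg(D|_S) = |D| - \deg(D|_{\bar S})$. Using $|D| = |E(\Gamma)|$, the Hall inequality becomes $|E(\Gamma_{\bar S})| \leq \deg(D|_{\bar S})$ for every $\bar S$, which is the subgraph condition of the proposition for induced subgraphs. Since every subgraph $\Gamma_1 \subseteq \Gamma$ satisfies $|E(\Gamma_1)| \leq |E(\Gamma_{V(\Gamma_1)})|$, the induced version implies the general one, and conversely the general version trivially contains the induced case. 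The main obstacle is the sufficiency step: one must set up the bipartite model with due care for parallel edges and loops, and keep track of which side of the cut carries which inequality, so that the complementation trick outputs exactly the condition stated in the proposition rather than its reverse.
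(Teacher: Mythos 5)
Your proof is correct: the necessity computation is right, and the sufficiency argument via the bipartite transportation model, the demand-side Hall/min-cut condition, and the complementation to induced subgraphs (with the induced case implying the general one) all check out, including the treatment of loops. The paper gives no proof of its own, only citing references and noting that the statement is equivalent to max-flow min-cut, so your argument is precisely the route the paper points to.
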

  For the proof of Proposition \ref{balDef2}, see e.g.~\cite{hakimi, gyarfas1976orient, Alexeev}. See also~\cite{backman} where it is shown that this result is equivalent to the well-known max-flow min-cut theorem in optimization theory.\par
  We will also need one more description of indegree divisors. To each vertex $v_i$ of $\Gamma$, we associate a formal variable $x_i = \exp({v_i})$, and consider the polynomial
  $$
  B_\Gamma(x_1, \dots, x_n) := \prod_{[v_i, v_j]} (x_i + x_j)
  $$
  where the product is taken over all edges of $\Gamma$. We have
 \begin{align}\label{BPolynomial}
    B_\Gamma =\prod_{[v_i, v_j]} (\exp({v_i})+ \exp({v_j}))=\!\!\!  \!\!\! \sum_{D \,\in\, \Div(\Gamma)} \!\!\!\!\!  \mult(\Gamma, D) \exp(D),
  \end{align}
   where $\mult(\Gamma, D)$ is a non-negative integer for every divisor $D$. 
   The following proposition is straightforward:
   \begin{proposition}\label{balDef3}
 A divisor $D = \sum d_iv_i   \in \Div(\Gamma)$ is an indegree divisor if and only if the polynomial $B_\Gamma$ contains the monomial $\exp(D) = x_1^{d_1}\cdots x_k^{d_k}, $ or, equivalently, if and only if $\mult(\Gamma, D) > 0$. Moreover, if $D$ is an indegree divisor, then the number $\mult(\Gamma, D)$ is equal to the number of orientations $\mathfrak o \in \pazocal O(\Gamma)$ such that $D = \indeg(\mathfrak o)$.     \end{proposition}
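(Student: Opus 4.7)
The plan is to prove both assertions simultaneously by expanding the defining product $B_\Gamma = \prod_{[v_i,v_j] \in E(\Gamma)} (x_i + x_j)$ directly and matching its monomials with orientations. By distributivity, every monomial in the full expansion arises by choosing, independently for each edge $e = [v_i, v_j]$, one of its two endpoint variables. I would set up a bijection between such choice functions and orientations $\mathfrak o \in \pazocal O(\Gamma)$ by declaring that the chosen variable marks the head of the edge: picking $x_i$ out of the factor $(x_i + x_j)$ corresponds to orienting $e$ with $v_i$ as its head. For a loop at $v_i$ the factor $(x_i + x_i) = 2x_i$ contributes both choices, consistent with counting the two orientations of a loop separately.

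Under this bijection, the monomial contributed by an orientation $\mathfrak o$ is, by construction and by the very definition \eqref{indegDiv} of $\indeg(\mathfrak o)$,
\begin{equation*}
\prod_{v_i \in V(\Gamma)} x_i^{\indeg_{\mathfrak o}(v_i)} \;=\; \exp(\indeg(\mathfrak o)).
\end{equation*}
Summing over all choice functions I would therefore obtain the identity
\begin{equation*}
B_\Gamma \;=\; \sum_{\mathfrak o \,\in\, \pazocal O(\Gamma)} \exp(\indeg(\mathfrak o)).
\end{equation*}
Grouping the right-hand side according to the value of the divisor $\indeg(\mathfrak o)$ and comparing with the defining expansion \eqref{BPolynomial} gives
\begin{equation*}
\mult(\Gamma, D) \;=\; \#\{\mathfrak o \in \pazocal O(\Gamma) \mid \indeg(\mathfrak o) = D\}
\end{equation*}
for every $D \in \Div(\Gamma)$. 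Both claims of the proposition now follow at once: $\mult(\Gamma, D) > 0$ precisely when at least one orientation realises $D$ as its indegree divisor, which by Definition \ref{balDef1} means $D \in \BDiv(\Gamma)$; and in that case $\mult(\Gamma, D)$ is exactly the number of such orientations.

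I do not expect any serious obstacle: the argument is an "expand and interpret" generating-function identity. The only bookkeeping issue is the handling of loops and parallel edges, but this is already absorbed into the convention that $B_\Gamma$ is a product indexed by the edge multiset of $\Gamma$, so the bijection between choice functions and orientations is valid verbatim for arbitrary multigraphs with loops.
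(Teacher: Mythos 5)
Your proof is correct and is precisely the intended argument: the paper declares this proposition ``straightforward'' and omits the proof, but the expand-and-interpret bijection between summands of the product $\prod_{[v_i,v_j]}(x_i+x_j)$ and orientations, yielding $B_\Gamma = \sum_{\mathfrak o}\exp(\indeg(\mathfrak o))$, is exactly what is meant. Your treatment of loops via $(x_i+x_i)=2x_i$ also matches the paper's convention (cf.\ Remark~\ref{multOfVert}, where each loop contributes a factor of $2$ to the multiplicity).
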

   \begin{example}\label{triangleEx2}
   Let $\Gamma$ be a triangle. Then
\begin{align}\label{triangleB}
\begin{aligned}
   B_\Gamma&= (x_1 + x_2)(x_1 + x_3)(x_2 + x_3) = \\  &x_1^2 x_2 + x_1x_2^2 + x_1^2x_3 + x_1x_3^2 + x_2^2x_3 + x_2x_3^2 + 2x_1x_2x_3, 
   \end{aligned}
   \end{align}
which again shows that there are $7$ indegree divisors on $\Gamma$ (cf. Figure \ref{triangle}).
   \end{example}
   Now, consider the space $\mathrm{span}_\R \langle v_1, \dots, v_k \rangle = \Div(\Gamma) \otimes \R$
 formally spanned over $\R$ by vertices of $\Gamma$. Note that each edge of $\Gamma$ may be viewed as a line segment in this space. The following definition is due to T.\,Zaslavsky, see \cite{postnikov}:
 \begin{definition}
 Let $\Gamma$ be a graph. Then the \textit{graphical zonotope} $Z_\Gamma$ is the Minkowski sum of edges $[v_i, v_j] \in \Gamma$ considered as line segments in the space $\mathrm{span}_\R \langle v_1, \dots, v_k \rangle$. 
 \end{definition}
 \begin{statement}\label{balDef4}
 For any graph $\Gamma$, indegree divisors on $\Gamma$ are exactly lattice points in the corresponding graphical zonotope $Z_\Gamma$.
 \end{statement}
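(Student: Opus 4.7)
The plan is to verify the two inclusions of the claimed equality $\BDiv(\Gamma) = Z_\Gamma \cap \Div(\Gamma)$ separately. The inclusion $\BDiv(\Gamma)\subseteq Z_\Gamma$ is essentially tautological: the indegree divisor of an orientation $\mathfrak{o}$ is obtained by selecting, for every edge $e = [v_i,v_j]$, its head under $\mathfrak{o}$ as the representative of the segment $[v_i, v_j]$ in the Minkowski sum defining $Z_\Gamma$. Summing over edges yields a lattice point of $Z_\Gamma$ manifestly equal to $\indeg(\mathfrak{o}) = \sum_v \indeg_{\mathfrak{o}}(v) \cdot v$.

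For the reverse inclusion I would translate the zonotope to the origin and exploit the incidence matrix. Fixing an arbitrary reference orientation of the edges, one can write
\[
Z_\Gamma = c_0 + M\bigl([0,1]^{E(\Gamma)}\bigr), \qquad c_0 = \sum_{e \in E(\Gamma)} v_{t(e)},
\]
where $t(e)$ denotes the tail of $e$ under the reference orientation and $M$ is the corresponding signed incidence matrix (with loops contributing zero columns). Under this parametrization, the vertices of the cube $\{0,1\}^{E(\Gamma)}$ correspond to orientations of $\Gamma$, and the map $t \mapsto c_0 + Mt$ sends such a $\{0,1\}$-vector to the indegree divisor of the orientation that flips precisely the edges with $t_e = 0$. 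Hence any lattice point $D \in Z_\Gamma \cap \Div(\Gamma)$ has the form $D = c_0 + Mt$ for some $t \in [0,1]^{E(\Gamma)}$, and it suffices to replace $t$ by a $\{0,1\}$-vector $t'$ with $Mt' = Mt$.

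This replacement is the heart of the argument. Setting $y = D - c_0 \in \Z^{V(\Gamma)}$, the fiber polytope $P_y = \{t \in [0,1]^{E(\Gamma)} : Mt = y\}$ is nonempty by construction. The classical fact that the signed incidence matrix of any graph is totally unimodular, combined with the Hoffman--Kruskal theorem, guarantees that every vertex of $P_y$ is a $\{0,1\}$-vector; any such vertex supplies the required $t'$ and thus an orientation $\mathfrak{o}$ with $\indeg(\mathfrak{o}) = D$. The delicate point is precisely the appeal to total unimodularity: for a general zonotope spanned by integer segments, lattice points need not be sums of endpoints (witness the parallelogram spanned by $e_1 + e_2$ and $e_1 - e_2$, whose centre is an extra lattice point). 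It is the graph-theoretic structure of $M$ that rules out such pathologies here.
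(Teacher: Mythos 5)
Your proof is correct, but it takes a genuinely different route from the paper's. The paper observes that $Z_\Gamma$ is the Newton polytope of $B_\Gamma=\prod(x_i+x_j)$ (Minkowski additivity of Newton polytopes under products), hence the convex hull of the indegree divisors; the forward inclusion is then immediate, and for the converse it invokes Proposition \ref{balDef2}: the defining conditions there (degree equal to $|E(\Gamma)|$, and degree at least $|E(\Gamma_1)|$ on every subgraph $\Gamma_1$) are a linear equality and linear inequalities, so they pass to any lattice point of the convex hull, which is therefore itself an indegree divisor. You instead prove the forward inclusion by picking endpoints of the edge segments directly, and for the converse you parametrize $Z_\Gamma$ as $c_0+M([0,1]^{E(\Gamma)})$ via the signed incidence matrix and apply total unimodularity together with Hoffman--Kruskal to replace a fractional preimage of a lattice point by a $\{0,1\}$-vertex of the fiber polytope, i.e.\ by an orientation. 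The two arguments are cousins -- Proposition \ref{balDef2} is itself a max-flow min-cut statement whose standard proofs run through unimodularity or augmenting paths -- but yours is self-contained modulo the classical TU fact and does not presuppose the Hakimi-type characterization, while the paper's is shorter because that characterization has already been recorded. A further merit of your write-up is the parallelogram example showing that integrality of lattice points in a zonotope is not automatic, which correctly isolates where the graph-theoretic input enters; the paper's proof hides this same subtlety inside the citation of Proposition \ref{balDef2}.
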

 \begin{proof}
  It is well known that the Newton polytope of a product is the Minkowski sum of Newton polytopes of factors. Therefore, the graphical zonotope $Z_\Gamma$ is the Newton polytope of the polynomial $B_\Gamma$. In other words, $Z_\Gamma$ is the convex hull of all indegree divisors. In particular, any indegree divisor is a lattice point in  $Z_\Gamma$. Conversely, let $D$ be a lattice point in $Z_\Gamma$. Then $D$ lies in the convex hull of indegree divisors, and, as follows from  Proposition \ref{balDef2}, it is itself an indegree divisor, q.e.d.
%
\end{proof}
Now we have four different descriptions of indegree divisors: in terms of orientations, in terms of linear inequalities, in terms of monomials in the polynomial $B_\Gamma$, and in terms of lattice points in the graphical zonotope $Z_\Gamma$.
 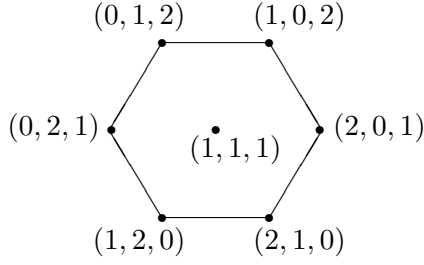
\begin{figure}[t]
\centerline{
\begin{picture}(100,100)
\put(50,50){
\begin{picture}(200,200)
\put(-20, 33){\line(1,0){40}}
\put(20, 33){\line(3,-5){20}}
\put(20, -33){\line(3,5){20}}
\put(-20, -33){\line(1,0){40}}
\put(-20, 33){\line(-3,-5){20}}
\put(-20, -33){\line(-3,5){20}}
\put(39,0){\circle*{3}}
\put(44,-2){$(2,0,1)$}
\put(14,-45){$(2,1,0)$}
\put(-46,-45){$(1,2,0)$}
\put(-78,-2){$(0,2,1)$}
\put(-46,40){$(0,1,2)$}
\put(14,40){$(1,0,2)$}
\put(-10,-10){$(1,1,1)$}
\put(-39,0){\circle*{3}}
\put(20,33){\circle*{3}}
\put(-20,33){\circle*{3}}
\put(20,-33){\circle*{3}}
\put(-20,-33){\circle*{3}}
\put(0,0){\circle*{3}}
\end{picture}
}
\end{picture}
}
\caption{Lattice points in the permutohedron $P_3$}\label{permut3}
\end{figure}
 \begin{example}\label{completeGraph}
 Let $\Gamma = K_n$ be the complete graph on $n$ vertices. Then 
 $$
  Z_\Gamma =\sum_{i < j} \,\Newton(x_i + x_j) = \sum_{i < j}\, \Newton(x_i - x_j) = \Newton(\mathrm {V}(x_1, \dots, x_n))
 $$
 where $\Newton(B)$ stands for the Newton polytope of a polynomial $B$, and $\mathrm {V}$ is the Vandermonde determinant
 $$
\mathrm {V}(x_1, \dots, x_n) =  \prod_{i < j}\, (x_i - x_j) = \det(x_i^{j-1}) = \sum_{\sigma \in S_n} (-1)^\sigma x_1^{\sigma(0)} \dots x_n^{\sigma(n-1)},
 $$
 and $S_n$ denotes all permutations of $(0,1, \dots, n-1)$. So, the graphical zonotope for the complete graph $K_n$ is the convex hull of all permutations $\sigma \in S_n$. This polytope is known as the \textit{permutohedron} $P_n$. Figure \ref{permut3} depicts the permutohedron $P_3$ corresponding to the triangle $K_3$. Lattice points in this permutohedron are seven indegree divisors (cf. Example \ref{triangleEx1} and Example~\ref{triangleEx2}).
  \end{example}
 \begin{remark}\label{verticesOfZG}
 Another way to define the graphical zonotope $Z_\Gamma$ is to consider the convex hull of indegree divisors of all acyclic orientations. Moreover, indegree divisors of the form $\indeg(\mathfrak o)$ where $\mathfrak o$ is acyclic are precisely vertices of $Z_\Gamma$ (cf. \cite{Stanley}, Exercise 4.32). Note that if the graph $\Gamma$ has loops, then it does not admit acyclic orientations. In this case, vertices of the graphical zonotope $Z_\Gamma$  are indegree divisors of those orientations that have no oriented cycles except compositions of loops.
 \end{remark}
 \section{Description of strata}\label{secDOS}
 In Section \ref{genCons}, we constructed a stratification
 $
 \pazocal P^B_C = \bigsqcup\,\Str(\Gamma ,D)
 $
 where $\Gamma$ is a generating subgraph of the dual graph $\Gamma_C$ of $C$, and $D$ is a divisor on $\Gamma$.  Now, we describe those divisors $D$ for which the stratum $\Str(\Gamma ,D)$ is non-empty.\par
For a generating subgraph $\Gamma$ of the dual graph $\Gamma_C$, denote by ${C}_{\Gamma}$ the curve obtained from $C$ by resolving those nodes which correspond to edges $e \in \Gamma_C \setminus \Gamma$. Let also ${C}_{\Gamma} \, / \, \infty$ be the curve obtained from ${C}_{\Gamma}$ by identifying $n$ points at infinity. 
\begin{theorem}\label{thm1}
Assume that $C$ is a nodal curve satisfying condition \eqref{specCond}. Let also $\Gamma$ be a generating subgraph of the dual graph $\Gamma_C$ of $C$, and let $D \in \Div(\Gamma)$. Then 
\begin{longenum} \item the stratum $\Str(\Gamma ,D)$ is non-empty if and only if $D$ is an indegree divisor on $\Gamma$;
\item each non-empty stratum $\Str(\Gamma ,D)$ is a smooth irreducible quasi-affine variety biholomorphic to an open dense subset in the generalized Jacobian of the curve ${C}_{\Gamma} \, / \, \infty$. In particular,
\begin{align*}
\dim \Str(\Gamma ,D)=  \frac{mn(n-1)}{2} - |E(\Gamma_C)| + |E(\Gamma)|;
\end{align*}
\item for each non-empty stratum $\Str(\Gamma ,D)$, the quotient $\Str(\Gamma ,D) / G^B$ is biholomorphic to an open dense subset in the generalized Jacobian of the curve ${C}_{\Gamma}$.
\end{longenum}
\end{theorem}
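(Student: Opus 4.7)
The plan is to identify each nonempty stratum $\Str(\Gamma,D)$ with an open dense subset of the generalized Jacobian $\Jac(C_\Gamma/\infty)$ and to deduce all three claims from this identification. The main technical tool is the eigenvector line bundle construction already used to define $D_P$, together with an inverse construction that builds matrix polynomials from line bundles.

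First I would construct the forward map $\Str(\Gamma,D)\to\Jac(C_\Gamma/\infty)$. Given $P\in\Str(\Gamma,D)$, the bundle $E_P^*$ a priori lives on the normalization $X$. Every edge $e\in\Gamma$ corresponds to a node of $C$ at which $\dim\Ker(P(\lambda)-\mu\cdot\Id)=1$, so the germs of $\psi_P$ on either side of the preimage of this node in $X$ land on the same line in $\Complex^n$; hence $E_P^*$ descends to a line bundle on the partial normalization $C_\Gamma$. At $\lambda=\infty$, the limiting eigenvectors are the $n$ eigenvectors of the diagonal matrix $B$, which provides canonical gluing data at the $n$ points of $X$ lying over infinity, so $E_P^*$ in fact descends to a line bundle on $C_\Gamma/\infty$ whose multidegree, via \eqref{dpdef}, is exactly $D$.

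For the inverse map I would follow the classical scheme going back to Adler--van Moerbeke, Reyman--Semenov-Tian-Shansky, Beauville and Gavrilov: given a line bundle $\mathcal L$ on $C_\Gamma/\infty$ of multidegree $D$, push it forward along $(\lambda,\mu)\mapsto\lambda$; the prescribed behavior at infinity trivializes this rank-$n$ sheaf on a neighborhood of $\lambda=\infty$, and multiplication by $\mu$ on sections then yields the matrix polynomial $P(\lambda)$. For generic $\mathcal L$ the pushforward is locally free of rank $n$, the resulting $P$ has leading coefficient $B$, and the two constructions are mutually inverse. The stratum is thereby identified with the complement of an analogue of the theta divisor in $\Jac(C_\Gamma/\infty)$, giving smoothness, irreducibility and quasi-affineness in part (ii); the dimension formula reduces to a genus computation that, via the spectral condition \eqref{specCond}, unwinds to $\tfrac{1}{2}mn(n-1)-|E(\Gamma_C)|+|E(\Gamma)|$. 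Part (iii) then follows because the $G^B$-action on $\Str(\Gamma,D)$ corresponds under the identification exactly to changing the gluing datum at the $n$ points over $\infty$, so its quotient is an open subset of $\Jac(C_\Gamma)$ itself.

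The main obstacle is part (i), and in particular that the indegree property is sharp. The total-degree identity $|D_P|=|E(\Gamma)|$ is immediate from \eqref{totalDegreeFormula} combined with \eqref{dpdef} and $\sum_i n_i=n$, but the subgraph inequalities in Proposition \ref{balDef2} require more work: for any $\Gamma_1\subset\Gamma$ with vertex set $W$, one restricts $E_P^*$ to the subcurve $\bigcup_{i\in W}X_i$ and argues that each edge of $\Gamma_1$ imposes a one-dimensional matching condition on sections, forcing $\deg D_P|_{\Gamma_1}\geq|E(\Gamma_1)|$. Conversely, given any indegree divisor $D$, one uses Proposition \ref{balDef3} to pick an orientation $\mathfrak o$ with $\indeg(\mathfrak o)=D$, builds a line bundle of multidegree $D$ on $C_\Gamma/\infty$ satisfying the genericity required by the inverse construction, and applies that construction to exhibit a point of $\Str(\Gamma,D)$. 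This step closely parallels Beauville's analysis of theta divisors on reducible nodal curves via normalized semistable multidegrees, cited in the remark following Proposition \ref{balDef2}.
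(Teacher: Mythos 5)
Your proposal follows essentially the same route the paper indicates: the paper gives no detailed argument for Theorem \ref{thm1}, stating only that the proof rests on a careful analysis of the matrix-polynomial/line-bundle correspondence of Section \ref{genCons} and parallels Gavrilov's proof in the smooth case, with details deferred to a separate publication. Your forward and inverse constructions identifying $\Str(\Gamma,D)$ with an open subset of $\Jac(C_\Gamma/\infty)$, together with the reduction of part (i) to the semistability inequalities of Proposition \ref{balDef2} in the spirit of Beauville's normalized semistable multidegrees, is exactly that declared strategy.
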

\begin{corollary}
We have $$\dim \pazocal P^B_C = \dfrac{mn(n-1)}{2}. $$
\end{corollary}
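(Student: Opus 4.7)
The plan is to derive the corollary directly from part (ii) of Theorem~\ref{thm1}. Since the stratification $\pazocal P^B_C = \bigsqcup \Str(\Gamma, D)$ is a finite disjoint decomposition, the dimension of $\pazocal P^B_C$ equals the maximum of $\dim \Str(\Gamma, D)$ over those pairs $(\Gamma, D)$ for which the stratum is non-empty. By Theorem~\ref{thm1}(ii), each non-empty stratum satisfies
\begin{equation*}
\dim \Str(\Gamma, D) = \frac{mn(n-1)}{2} - |E(\Gamma_C)| + |E(\Gamma)|,
\end{equation*}
and since $\Gamma$ is a generating subgraph of $\Gamma_C$ we have $|E(\Gamma)| \le |E(\Gamma_C)|$, with equality precisely when $\Gamma = \Gamma_C$. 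Hence the upper bound $\dim \pazocal P^B_C \le \frac{mn(n-1)}{2}$ is automatic, and attaining it reduces to exhibiting a non-empty stratum of the form $\Str(\Gamma_C, D)$.

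To produce such a stratum, I would invoke part (i) of Theorem~\ref{thm1}: $\Str(\Gamma_C, D)$ is non-empty iff $D$ is an indegree divisor on $\Gamma_C$. But indegree divisors always exist on any graph---simply pick an arbitrary orientation $\mathfrak{o} \in \pazocal O(\Gamma_C)$ and set $D = \indeg(\mathfrak{o})$ as in \eqref{indegDiv}. This gives a non-empty stratum of the maximal dimension $\frac{mn(n-1)}{2}$, completing the proof.

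The only genuine subtlety is the implicit hypothesis that $\pazocal P^B_C$ itself is non-empty, so that the dimension statement is meaningful; this is ensured by the condition \eqref{specCond}, which the author notes is both necessary and sufficient for a nodal $C$ to arise as a spectral curve. Given this, the argument is essentially combinatorial bookkeeping on top of Theorem~\ref{thm1}, and there is no real obstacle---the only step that uses nontrivial input is the existence half of Theorem~\ref{thm1}(i) applied to the tautological indegree divisor of any orientation of $\Gamma_C$.
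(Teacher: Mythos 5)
Your proof is correct and follows exactly the route the paper intends: the corollary is stated as an immediate consequence of Theorem~\ref{thm1}, obtained by maximizing the dimension formula of part (ii) over the (finitely many) non-empty strata and noting via part (i) that $\Str(\Gamma_C, D)$ is non-empty for $D = \indeg(\mathfrak{o})$ with $\mathfrak{o}$ any orientation of $\Gamma_C$. No discrepancy with the paper's (implicit) argument.
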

\begin{remark}
Details on generalized Jacobians of singular curves can be found in \cite{rosenlicht, serre}. Here we recall that the generalized Jacobian $\Jac(C)$ of a nodal curve $C$ is an extension of the Jacobian of the normalization of $C$ with a commutative algebraic group $(\Complex^*)^k$, where $k$ is a non-negative integer. The same is true for a nodal curve with identified points at infinity, provided that these points are all distinct. In particular, if $C$ is a rational nodal curve, then $\Jac(C) \simeq (\Complex^*)^k$, and $\Jac(C /\infty ) \simeq (\Complex^*)^m$, where $m \geq k$. 
\end{remark}
The proof of Theorem \ref{thm1} is based on careful analysis of the correspondence between matrix polynomials and line bundles described in Section \ref{genCons} and is similar to Gavrilov's proof in the smooth case \cite{Gavrilov2}. Details of the proof will be published elsewhere.
\begin{example}\label{twoLines5}
Let $C$ be two lines, as in Example \ref{twoLines} and Examples \ref{twoLines2} -  \ref{twoLines4_5}. Then
\begin{align*} \Str(
   \begin{tikzpicture}[baseline={([yshift=-.5ex]current bounding box.center)},vertex/.style={anchor=base,
    circle,fill=black!25,minimum size=18pt,inner sep=2pt}]
            \fill (0,0) circle (1.2pt);
            \fill (1,0) circle (1.2pt);
   		 \draw  [] (0,0) -- (1,0);    
		\node () at (-0.2,-0) {$0$};
		\node () at (1.2,-0) {$1$};	
            \end{tikzpicture}) \simeq \Str(
   \begin{tikzpicture}[baseline={([yshift=-.5ex]current bounding box.center)},vertex/.style={anchor=base,
    circle,fill=black!25,minimum size=18pt,inner sep=2pt}]
            \fill (0,0) circle (1.2pt);
            \fill (1,0) circle (1.2pt);
   		 \draw  [] (0,0) -- (1,0);    
		\node () at (-0.2,-0.) {$1$};
		\node () at (1.2,-0.) {$0$};	
            \end{tikzpicture}) \simeq \Complex^*,\quad
            \Str(
   \begin{tikzpicture}[baseline={([yshift=-.5ex]current bounding box.center)},vertex/.style={anchor=base,
    circle,fill=black!25,minimum size=18pt,inner sep=2pt}]
            \fill (0,0) circle (1.2pt);
            \fill (1,0) circle (1.2pt);
   		 \draw  [dashed] (0,0) -- (1,0);    
		\node () at (-0.2,-0.) {$0$};
		\node () at (1.2,-0.) {$0$};	
            \end{tikzpicture}) \simeq \mbox{a point},
\end{align*}
which means that for all strata we have an isomorphism $\Str(\Gamma ,D) \simeq \Jac(C_\Gamma / \infty)$, i.e. in this example the open dense subset from Theorem \ref{thm1} is the whole Jacobian.
%
\end{example}
\begin{figure}[t]
\centerline{
\begin{tikzpicture}
\node () at (-7,-2.4)
{
\begin{tikzpicture}[scale = 0.5]
\node (A) at (0,0) {
$ (0,1,2)$
};
\node (B) at (3.5,0)
{
$
(1,0,2)
$
};
\node (C) at (5, -2.4)
{
$
(2,0,1)
$
};
\node (D) at (3.5,-4.8)
{
$
(2,1,0)
$
};
\node (E) at (0,-4.8)
{
$
(1,2,0)
$
};
\node (F) at (-1.5,-2.4)
{
$
(0,2,1)
$
};
\draw (A) -- (B) -- (C) -- (D) --(E) -- (F) -- (A);
\end{tikzpicture}
};
\draw [dashed, ->] (-4.3,-2.4) -- (-3.3,-2.4);
\node (A) at (0,0) {
$ \left(\begin{array}{ccc}\nu_1 & * & * \\0 & \nu_2 & * \\0 & 0 & \nu_3\end{array}\right)$
};
\node (B) at (3.5,0)
{
$
 \left(\begin{array}{ccc}
\nu_1 & 0 & * \\ 
* & \nu_2 & * \\
 0 & 0 & \nu_3
 \end{array}\right)
$
};
\node (C) at (5, -2.4)
{
$
 \left(\begin{array}{ccc}
\nu_1 & 0 & 0 \\ 
* & \nu_2 & * \\
 * & 0 & \nu_3
 \end{array}\right)
$
};
\node (D) at (3.5,-4.8)
{
$
 \left(\begin{array}{ccc}
 \nu_1 & 0 & 0 \\ 
* & \nu_2 & 0 \\ 
 * & * & \nu_3
 \end{array}\right)
$
};
\node (E) at (0,-4.8)
{
$
  \left(\begin{array}{ccc}
 \nu_1 & * & 0 \\ 
 0 & \nu_2 & 0 \\ 
 * & * & \nu_3
 \end{array}\right)
$
};
\node (F) at (-1.5,-2.4)
{
$
   \left(\begin{array}{ccc}
 \nu_1 & * & * \\ 
0& \nu_2 & 0 \\ 
 0 & * & \nu_3
 \end{array}\right)
$
};
\draw (A) -- (B) -- (C) -- (D) --(E) -- (F) -- (A);
\end{tikzpicture}
}
\caption{Strata corresponding to vertices of the permutohedron $P_3$.}\label{vertComp}
\end{figure}
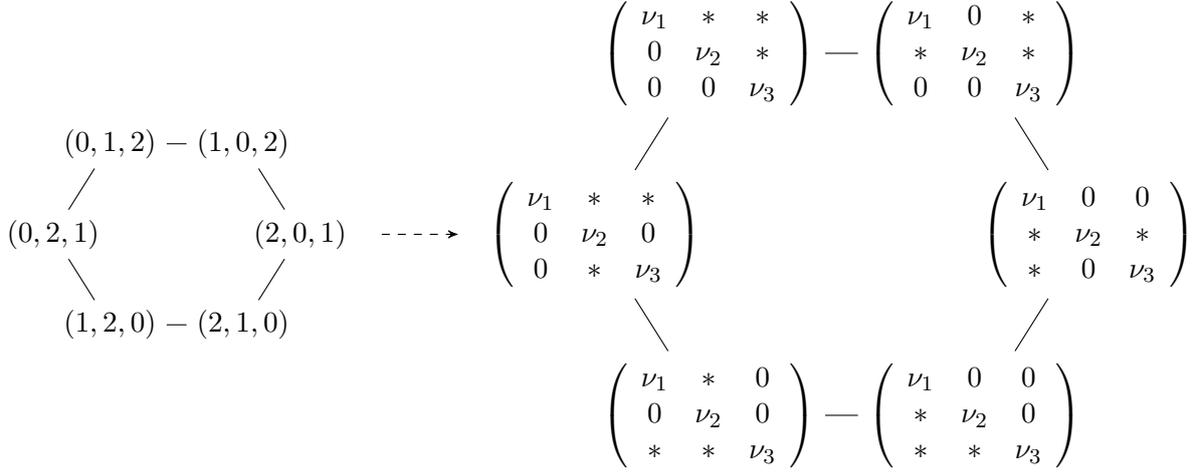
\begin{example}\label{threeLines}
Let $m =1$, $n=3$, and let $C$ be the union of three straight lines $C_i = \{\mu = a_i + b_i \lambda \}$ in general position. Let also $B= \diag(b_1, b_2, b_3)$. The dual graph $\Gamma_C$ is a triangle, so $|\BDiv(\Gamma_C)| = 7$  (see Figures \ref{triangle}, \ref{permut3}). Accordingly, the variety $ \pazocal P^B_C $ has seven strata of dimension three. Six of them correspond to vertices of the permutohedron $P_3$; these six strata are depicted in Figure \ref{vertComp}. 
The notation $\nu_i$ stands for $ a_i + b_i \lambda $, and the numbers denoted by stars are assumed to be non-zero. Note that there is a one-to-one correspondence between these strata and Borel subalgebras of $\gl_3$ that contain the Cartan subalgebra of diagonal matrices.
\par
Now, let us describe the seventh three-dimensional stratum $\Str(\Gamma_C, v_1 + v_2 +v_3)$ corresponding to the interior lattice point $(1,1,1)$ in the permutohedron $P_3$. Let $$(c_1,c_2,c_3) := (1,1,1) \times (b_1, b_2,b_3),\quad k := (c_1,c_2,c_3) \cdot (a_1, a_2, a_3)$$
where $\times$ is the cross product, and dot denotes the inner product. Then the stratum $\Str(\Gamma_C, v_1 + v_2 +v_3)$ consists of matrix polynomials of the form $A + \lambda B$ where $a_{ii} = a_i$, off-diagonal entries of the matrix $A$ satisfy
\begin{align*}
a_{12}a_{21} = c_3 z,\quad  a_{13}a_{31} = c_2 z, \quad a_{23}a_{32} = c_1 z, \quad a_{12}a_{23}a_{31} = w,\end{align*}
and $(z,w) \neq (0,0)$ is any point lying in the affine part of the nodal cubic
\begin{align}\label{ratCubic}
w(kz - w) = c_1c_2c_3z^3.
\end{align}

\par
 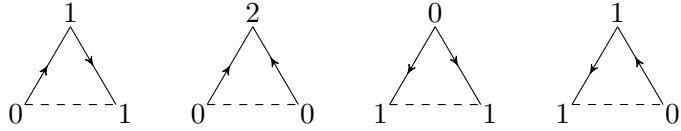
\begin{figure}[t]
\centerline{
\begin{tikzpicture}[thick,scale = 1.2]
 \node (a) at (0,0)
         {
            \begin{tikzpicture}[scale = 1.2]
   		 \draw  [->-] (0,0) -- (0.5, 0.86);
   		 \draw  [dashed] (0,0) -- (1,0);    
 		   \draw  [->-]  (0.5, 0.86) -- (1,0); 
		\node () at (-0.1,-0.1) {$0$};
		\node () at (1.1,-0.1) {$1$};	
				\node () at (0.5,1.02) {$1$};	
            \end{tikzpicture}
         }; 
          \node (b) at (2,0)
         {
            \begin{tikzpicture}[scale = 1.2]
   		 \draw  [->-] (0,0) -- (0.5, 0.86);
   		 \draw  [dashed] (0,0) -- (1,0);    
 		   \draw  [-<-]  (0.5, 0.86) -- (1,0); 
		\node () at (-0.1,-0.1) {$0$};
		\node () at (1.1,-0.1) {$0$};	
				\node () at (0.5,1.02) {$2$};	
            \end{tikzpicture}
         }; 
                   \node (c) at (4,0)
         {
            \begin{tikzpicture}[scale = 1.2]
   		 \draw  [-<-] (0,0) -- (0.5, 0.86);
   		 \draw  [dashed] (0,0) -- (1,0);    
 		   \draw  [->-]  (0.5, 0.86) -- (1,0); 
		\node () at (-0.1,-0.1) {$1$};
		\node () at (1.1,-0.1) {$1$};	
				\node () at (0.5,1.02) {$0$};	
            \end{tikzpicture}
         }; 
                            \node (d) at (6,0)
         {
            \begin{tikzpicture}[scale = 1.2]
   		 \draw  [-<-] (0,0) -- (0.5, 0.86);
   		 \draw  [dashed] (0,0) -- (1,0);    
 		   \draw  [-<-]  (0.5, 0.86) -- (1,0); 
		\node () at (-0.1,-0.1) {$1$};
		\node () at (1.1,-0.1) {$0$};	
				\node () at (0.5,1.02) {$1$};				
            \end{tikzpicture}
         }; 
\end{tikzpicture}
}
\caption{Four indegree divisors on two sides of a triangle}\label{path}
\end{figure}
Further, let us describe two-dimensional strata. There are three generating subgraphs of $\Gamma_C$ with two edges, and each of them has four indegree divisors (see Figure \ref{path}), so $ \pazocal P^B_C$ has $3 \times 4 = 12$ two-dimensional strata. These strata look similarly to strata in Figure~\ref{vertComp}, but with two stars (i.e., two non-zero elements) instead of three. Stars can be placed at any two off-diagonal positions which are not symmetric with respect to the main diagonal; there are exactly $12$ ways to choose such two positions. Similarly, there are six one-dimensional strata that can be obtained by placing one star at any position, and one zero-dimensional stratum, the point $\diag(\nu_1, \nu_2, \nu_3)$.
\end{example}
\begin{remark}
Note that the stratum $\Str(\Gamma_C, v_1 + v_2 +v_3)$ in the previous example is isomorphic to $(\Complex^*)^2 $ times a thrice punctured sphere, which means that the open dense subset from Theorem~\ref{thm1} in this case is \textit{not} the whole Jacobian.  For any other stratum of $ \pazocal P^B_C$, we have an isomorphism $\Str(\Gamma ,D) \simeq \Jac(C_\Gamma / \infty)$, as in Example \ref{twoLines5}. 
\end{remark}
\begin{remark}
Note that matrix polynomials lying in opposite vertices of the hexagon in Figure~\ref{vertComp} are transposes of each other. More generally, if $B$ is a symmetric matrix, then there is an involution $\sigma$ on $\pazocal P^B_C$ given by $\sigma(P) = P^t$. This involution maps each stratum $\Str(\Gamma,D)$ to another stratum $\Str(\Gamma, D')$ and thus induces an involution $\tau \colon D \mapsto D'$ on the set  $\BDiv(\Gamma)$. Explicitly, the involution $\tau$ can be defined by the formula $\tau(\indeg(\mathfrak o)) = \indeg(-{\mathfrak  o})$ where $-{\mathfrak  o}$ is the orientation inverse to $\mathfrak o$. In other words, we have $\tau(D) = \deg(\Gamma) - D$, where $\deg(\Gamma) := \sum_{v \in V(\Gamma)} \deg(v)v$ is the degree divisor of the graph $\Gamma$. Equivalently, the involution $\tau$ can be described as the central symmetry of the graphical zonotope $Z_\Gamma$.\end{remark}

\section{Local structure and irreducible components}

In this section, we describe the local structure of the variety $\pazocal P^B_C$ in the neighborhood of its smooth stratum $\Str(\Gamma ,D)$. Let a \textit{node} be the germ at the origin of the complex-analytic variety $\{(z,w) \in \Complex^2 \mid zw = 0\}$. 

\begin{theorem}\label{thm2}
Let $P \in \Str(\Gamma ,D)$. Then, in the neighborhood of the point $P$, the isospectral variety $\pazocal P^B_C$ is locally isomorphic to the direct product of $\codim \Str(\Gamma ,D) =  |E(\Gamma_C)| - |E(\Gamma)|$ nodes\footnote{We use the notation $\codim \Str(\Gamma ,D)$ for the codimension of the stratum $ \Str(\Gamma ,D)$ in $\pazocal P^B_C$, i.e. $\codim \Str(\Gamma ,D) := \dim \pazocal P^B_C - \dim \Str(\Gamma ,D)$.} and a smooth disk of dimension  $\dim \Str(\Gamma ,D) = \dim \pazocal P^B_C - \left\lvert E(\Gamma_C)\right\rvert + |E(\Gamma)|.$
\end{theorem}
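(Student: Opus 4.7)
The plan is to analyze $\pazocal P^B_C$ near $P$ by parametrizing spectral-curve-preserving deformations and showing that they decouple into a smooth factor tangent to $\Str(\Gamma,D)$ and one nodal contribution for each ``degenerate'' node of $C$.

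First I would identify the singular directions. Let $p_1, \ldots, p_N$ be the nodes of $C$ corresponding to the edges in $E(\Gamma_C) \setminus E(\Gamma)$; by the construction of $\Gamma_P$ in Section \ref{genCons} these are precisely the points where $\dim \Ker(P(\lambda_j) - \mu_j \Id) = 2$, so $N = \codim \Str(\Gamma,D)$. At each such $p_j$, fix a basis $v_j^+, v_j^-$ of the two-dimensional kernel adapted to the two local branches of $C$ at $p_j$: namely, for any family $P_t \to P$ of matrix polynomials in the adjacent stratum $\Str(\Gamma \cup \{e_j\}, D_\pm)$, the one-dimensional eigenspace at $p_j$ limits to $\Complex v_j^\pm$. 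The existence and distinctness of $v_j^+$ and $v_j^-$ follow from the smoothness of the two adjacent strata (Theorem \ref{thm1}) and the form of the eigenvector map described in Section \ref{genCons}.

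Next I would introduce local coordinates $(x; z_1, w_1, \ldots, z_N, w_N)$ on an ambient smooth space containing a neighborhood of $P$ in $\pazocal P^B_C$, where $x$ parametrizes $\Str(\Gamma, D)$ (smooth by Theorem \ref{thm1}) and each pair $(z_j, w_j)$ encodes the transverse direction at the degenerate node $p_j$: $z_j$ measures the deviation of the deformed eigenline from $\Complex v_j^+$ and $w_j$ the deviation from $\Complex v_j^-$. The central claim is that in these coordinates the defining equations of $\pazocal P^B_C$ reduce to the uncoupled system $z_j w_j = 0$ for $j = 1, \ldots, N$, with no constraint on $x$. The dichotomy at each $p_j$ is geometric: either the deformed kernel remains two-dimensional ($z_j = w_j = 0$, i.e., we stay in $\Str(\Gamma,D)$), or it becomes one-dimensional, in which case the surviving eigenline necessarily selects one of the two branches of $C$ at $p_j$, forcing precisely one of $z_j$ or $w_j$ to vanish. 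Decoupling across different $j$ follows because the germs of $C$ at $p_1, \ldots, p_N$ are pairwise disjoint, so the characteristic polynomial condition factorizes nodewise.

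The main obstacle is showing that the local defining equation at each node is genuinely $z_j w_j = 0$ and not merely a higher-order singularity with the same tangent cone, while simultaneously ruling out any spurious equations. I would address this by combining two ingredients. First, Theorem \ref{thm1} produces, for each system of orientations of $e_1, \ldots, e_N$, a smooth top-dimensional stratum $\Str(\Gamma_C, D')$ whose closure passes through $P$; in the coordinates above, each such smooth branch lies inside one of the $2^N$ coordinate subspaces of $\prod_j \{z_j w_j = 0\}$, and together these $2^N$ smooth branches exhaust the local top-dimensional components of $\pazocal P^B_C$ at $P$. Second, a direct Zariski tangent space computation at $P$, obtained by linearizing the characteristic polynomial equation near each $p_j$ and using that at $P$ the rank drops by exactly one per degenerate node, yields the upper bound $\dim T_P \pazocal P^B_C \leq \dim \Str(\Gamma, D) + 2N$, matching the tangent space of the conjectural model. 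Combining the lower bound from the $2^N$ smooth branches with this tangent space upper bound then pins down the germ of $\pazocal P^B_C$ at $P$ as the product of $N$ nodes and a smooth disk of dimension $\dim \Str(\Gamma, D)$.
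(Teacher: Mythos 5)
Your route is genuinely different from the paper's: the author obtains Theorem \ref{thm2} by showing that every $P\in\pazocal P^B_C$ over a nodal curve is a \emph{non-degenerate} singular point of the integrable system on matrix polynomials and then applying Vey's normal form theorem, which produces the product-of-nodes model in one stroke. You instead try to assemble the local model from the stratification, and as written this has a circularity problem. You feed in that for each of the $2^N$ orientations of $e_1,\dots,e_N$ there is a smooth top-dimensional local branch of $\pazocal P^B_C$ through $P$, and that these branches exhaust the germ. But Theorem \ref{thm1} only gives smoothness and non-emptiness of the \emph{open} strata; it says nothing about which closures pass through $P$, and those closures are typically singular at $P$: distinct orientations of $e_1,\dots,e_N$ can produce the same divisor $D'$, in which case the closure of $\Str(\Gamma_C,D')$ is locally a union of several branches at $P$. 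This multiplicity phenomenon is exactly Theorem \ref{thm3}, and both it and the identification of the irreducible components of $\pazocal P^B_C$ are derived in the paper as \emph{consequences} of Theorem \ref{thm2}. So the local branch structure you invoke is essentially the statement to be proved.

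Second, even granting the $2^N$ smooth branches and the bound $\dim T_P\,\pazocal P^B_C\le q+2N$, the germ is not pinned down: this neither excludes additional lower-dimensional components through $P$ nor forces the branches into the specific configuration $\prod_j\{z_jw_j=0\}\times\Complex^q$ (already for $N=2$, four smooth $(q{+}2)$-dimensional branches in a $(q{+}4)$-dimensional ambient space with the right tangent span need not form a product of two nodes). The coordinates $(z_j,w_j)$ in which everything is supposed to decouple are posited rather than constructed, and the claim that the equations ``factorize nodewise'' because the nodes of $C$ are disjoint conflates the geometry of the spectral curve with that of the isospectral variety: a single perturbation of the coefficients of $P$ moves all nodes of $C_P$ simultaneously, so decoupling in $\poly$ is precisely what must be established. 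What is missing is the honest local computation showing that the rank/discriminant conditions at the $N$ degenerate nodes cut out transverse, nondegenerate quadratic cones --- which is the content of the non-degeneracy verification in the paper's approach.
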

\begin{example}
Let $C$ be two lines, as in Examples \ref{twoLines} and \ref{twoLines2} - \ref{twoLines4_5}, and let $\emptyset \subset \Gamma_C$ be a generating subgraph with no edges, i.e. the disjoint union of vertices of $\Gamma_C$. The only indegree divisor on the graph $\emptyset$ is the zero divisor, and the corresponding stratum $\Str(\emptyset ,0)$ is one point $\diag(a_1+b_1\lambda, a_2+b_2\lambda)$. Theorem \ref{thm2} implies that near this point $\pazocal P^J_{C}$ is locally a node. Two disks forming this node are closures of the strata $ \Str(
   \begin{tikzpicture}[baseline={([yshift=-.5ex]current bounding box.center)},vertex/.style={anchor=base,
    circle,fill=black!25,minimum size=18pt,inner sep=2pt}]
            \fill (0,0) circle (1.2pt);
            \fill (1,0) circle (1.2pt);
   		 \draw  [] (0,0) -- (1,0);    
		\node () at (-0.2,-0) {$0$};
		\node () at (1.2,-0) {$1$};	
            \end{tikzpicture})$ and $\Str(
   \begin{tikzpicture}[baseline={([yshift=-.5ex]current bounding box.center)},vertex/.style={anchor=base,
    circle,fill=black!25,minimum size=18pt,inner sep=2pt}]
            \fill (0,0) circle (1.2pt);
            \fill (1,0) circle (1.2pt);
   		 \draw  [] (0,0) -- (1,0);    
		\node () at (-0.2,-0.) {$1$};
		\node () at (1.2,-0.) {$0$};	
            \end{tikzpicture}) $.
\end{example}
\begin{example}\label{threeLinesMult0}
Let $C$ be three lines, as in Example \ref{threeLines}. Then the stratum $\Str(\emptyset ,0)$ is one point $\diag(\nu_1, \nu_2, \nu_3)$ where $\nu_i = a_i + b_i \lambda$. Theorem \ref{thm2} implies that near this point the isospectral variety $\pazocal P^J_{C}$ is locally a product of three nodes, i.e. a union of eight three-dimensional disks. Six of these disks are closures of strata depicted in Figure~\ref{vertComp}, and other two belong to the closure of the seventh stratum $\Str(\Gamma_C, v_1 + v_2 + v_3)$ (note that this closure has a double point at the origin corresponding to a double point of the curve~\eqref{ratCubic}).
\end{example}
\begin{corollary}[of Theorem \ref{thm2}]
Irreducible components of the variety $ \pazocal P^B_C$ are in one-to-one correspondence with indegree divisors on the dual graph $\Gamma_C$.
\end{corollary}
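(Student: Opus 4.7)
The plan is to identify the irreducible components of $\pazocal P^B_C$ with closures of the top-dimensional strata of the stratification $\pazocal P^B_C = \bigsqcup \Str(\Gamma, D)$ from Theorem \ref{thm1}, and then to observe that those top strata are indexed precisely by $\BDiv(\Gamma_C)$.

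First I would pin down the top-dimensional strata. By Theorem \ref{thm1}(ii) one has $\dim \Str(\Gamma, D) = \frac{mn(n-1)}{2} - |E(\Gamma_C)| + |E(\Gamma)|$, which equals $\dim \pazocal P^B_C = \frac{mn(n-1)}{2}$ precisely when $\Gamma = \Gamma_C$. Combined with Theorem \ref{thm1}(i), the top strata are exactly the non-empty sets $\Str(\Gamma_C, D)$ with $D \in \BDiv(\Gamma_C)$.

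Next I would show that each closure $\overline{\Str(\Gamma_C, D)}$ is an irreducible component and that distinct indegree divisors give distinct components. Applied at a point $P \in \Str(\Gamma_C, D)$, Theorem \ref{thm2} gives $\codim \Str(\Gamma_C, D) = 0$, so the local model has no node factors and is simply a smooth disk of dimension $\dim \Str(\Gamma_C, D)$; in particular $\Str(\Gamma_C, D)$ coincides with $\pazocal P^B_C$ in a neighborhood of $P$, so the stratum is \emph{open} in $\pazocal P^B_C$. Being also irreducible by Theorem \ref{thm1}(ii) and of maximal dimension, its closure is an irreducible component of $\pazocal P^B_C$. Since the strata are pairwise disjoint, their closures are pairwise distinct irreducible components.

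Finally I would verify that no other irreducible components exist. Let $V$ be an arbitrary irreducible component of $\pazocal P^B_C$ and pick a point $P \in V$ not lying on any other irreducible component; such points form a dense open subset of $V$. The germ of $\pazocal P^B_C$ at $P$ then coincides with the germ of $V$ and is therefore irreducible. But by Theorem \ref{thm2} this germ is a product of $|E(\Gamma_C)| - |E(\Gamma_P)|$ node germs and a smooth disk, and such a product is irreducible only when no node factors occur, forcing $\Gamma_P = \Gamma_C$. Hence $P \in \Str(\Gamma_C, D_P)$ and $V = \overline{\Str(\Gamma_C, D_P)}$, which yields the claimed bijection $V \leftrightarrow D_P$. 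The argument is essentially a formal combination of Theorems \ref{thm1} and \ref{thm2}, so no serious obstacle is anticipated; the only point meriting care is the openness of the top strata in the second paragraph, which is a direct consequence of the fact that codimension zero in Theorem \ref{thm2} eliminates all node factors from the local model.
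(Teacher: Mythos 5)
Your proposal is correct and follows essentially the same route as the paper: the paper's (two-sentence) proof likewise invokes Theorem \ref{thm2} to conclude that every stratum with $\Gamma \subsetneq \Gamma_C$ lies in the closure of a higher-dimensional stratum, so that the irreducible components are exactly the closures of the strata $\Str(\Gamma_C, D)$, indexed by $\BDiv(\Gamma_C)$ via Theorem \ref{thm1}(i). Your version merely spells out the details (openness of the top strata, irreducibility of germs at generic points) more explicitly.
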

\begin{proof}
According to Theorem \ref{thm2}, each stratum $\Str(\Gamma, D)$ where $\Gamma$ is a proper subgraph of $\Gamma_C$ lies in the closure of a higher-dimensional stratum. Therefore, irreducible components of $ \pazocal P^B_C$ are closures of strata of the form $\Str(\Gamma_C,D)$, as desired.
\end{proof}
\begin{example}\label{nLines}
Let $m =1$, and let $C$ be the union of $n$ straight lines $\{\mu = a_i + b_i \lambda \}$ in general position. Let also $B= \diag(b_1, \dots, b_n)$. The dual graph $\Gamma_C$ is the complete graph $K_n$ on $n$ vertices. Therefore, irreducible components of $ \pazocal P^B_C$ are enumerated by lattice points in the permutohedron $P_n$ (see Example \ref{completeGraph}). It is easy to give an explicit description of components corresponding to vertices of the permutohedron: they generalize components depicted in Figure \ref{vertComp} and correspond to $n!$ Borel subalgebras of $\gl_n$ containing diagonal matrices. Explicit description of other components for $n > 3$ is unknown. \par
Also note that the number of lattice points in the permutohedron $P_n$ is known to be equal to the number of forests on $n$ labeled vertices. It is sequence A001858 in the online encyclopedia of integer sequences: $1,2,7,38, 291, \dots$.
\end{example}
\begin{remark}
The idea of the proof of Theorem \ref{thm2} is to show that if $C$ is a nodal curve, then every $P \in \pazocal P^B_C$ is a non-degenerate singular point for the integrable system on matrix polynomials (for the definition of non-degenerate singular points, see, e.g.,~\cite{bolosh}, Definition 7). Then one applies J.\,Vey's normal form theorem \cite{Vey}.
\end{remark}

  \section{Combinatorial interlude II: Multiplicities}
\begin{definition}\label{multDef}
Let $\Gamma$ be a graph, and let $D$ be an indegree divisor on $\Gamma$. The \textit{multiplicity} of $D$ is the number $\mult(\Gamma, D)$ entering formula \eqref{BPolynomial} for the polynomial $B_\Gamma$ or, equivalently, the number of orientations $\mathfrak o \in \pazocal O(\Gamma)$ such that $D = \indeg(\mathfrak o)$\footnote{Equivalence of these definitions follows from Proposition \ref{balDef3}.}.
\end{definition}
\begin{example}
Let $\Gamma$ be a graph with $2$ vertices joined by $n$ edges. Then
$
B_\Gamma = (x_1 + x_2)^n, 
$
so multiplicities are given by binomial coefficients.
\end{example}
\begin{example}\label{Schur}
Let $\Gamma = K_n$ be a complete graph on $n$ vertices. Then
 $$
 B_\Gamma(x_1, \dots, x_n) = \frac{\mathrm V(x_1^2, \dots, x_n^2)}{\mathrm V(x_1, \dots, x_n)},
 $$
i.e. $B_\Gamma$ is the Schur polynomial corresponding to the partition $\lambda = (n-1, n-2, \dots,1, 0)$. Therefore, indegree divisors on $\Gamma$ are weights of the irreducible representation of $\gl_n$ with highest weight $\lambda$, and the multiplicity of any indegree divisor is equal to the  multiplicity of the corresponding weight.
\end{example}
\begin{remark}\label{circuits}\label{multOfVert}
The multiplicity of an indegree divisor $D$ can also be defined as $1$ plus the number of possibly disconnected  oriented circuits in the directed graph $(\Gamma, \mathfrak o)$ where $\mathfrak o$ is any orientation of $\Gamma$ such that $\indeg(\mathfrak o) = D$, and a \textit{circuit} is a cycle allowing repetitions of vertices but not edges.
\par
Note that this definition of multiplicities in terms of oriented circuits implies that for a loopless graph $\mult(\Gamma, D) = 1$ if and only if $D$ is a vertex of the zonotope $Z_\Gamma$ (cf. Remark~\ref{verticesOfZG}). If the graph $\Gamma$ has $k$ loops, then for any $D \in \BDiv(\Gamma)$, we have $\mult(\Gamma, D) \geq 2^k$, and  $\mult(\Gamma, D) = 2^k$ if and only if $D$ is a vertex of the zonotope $Z_\Gamma$.
\end{remark}
Now, let us define the multiplicity of an indegree divisor along an indegree divisor on a subgraph. Let $\Gamma_1  \subset \Gamma$ be a generating subgraph, and let $\pazocal O(\Gamma \setminus \Gamma_1)$ be the set of all possible orientations of those edges of $\Gamma$ which do not belong to $\Gamma_1$. Then, for each such partial orientation $\mathfrak o$, we can define a divisor $\indeg(\mathfrak o)$ by the same formula \eqref{indegDiv}: the value of $\indeg(\mathfrak o)$ at the vertex $v$ is the number of arrows pointing at $v$. 
Note that if $D_1 \in \BDiv(\Gamma_1)$, and $\mathfrak o \in \pazocal O(\Gamma \setminus \Gamma_1)$, then $D_1 + \indeg(\mathfrak o) \in \BDiv(\Gamma)$. 
\begin{definition}\label{relMult}
The \textit{multiplicity} of $D \in \BDiv(\Gamma)$ along $D_1 \in \BDiv(\Gamma_1)$ is the number of orientations $\mathfrak o \in \pazocal O(\Gamma \setminus \Gamma_1)$ such that $ D - D_1 = \indeg(\mathfrak o)$. We denote the multiplicity of $D $ along $D_1 $ as $\mult(\Gamma, D \mid \Gamma_1, D_1)$. 

\end{definition}
\begin{example}\label{mult2example}  
\begin{align*}
   \mult\left( \left.\begin{tikzpicture}[scale = 1.7, baseline={([yshift=-.5ex]current bounding box.center)},vertex/.style={anchor=base,
    circle,fill=black!25,minimum size=18pt,inner sep=2pt}]
   		 \draw  [] (0,0) -- (0.5, 0.86);
		 \draw [] (0,0) -- (0.5, 0.3);
		 \draw (0.5, 0.3) -- (1,0);
		  \draw (0.5, 0.3) -- (0.5, 0.86);
   		 \draw  [] (0,0) -- (1,0);    
 		   \draw  []  (0.5, 0.86) -- (1,0); 
		\node () at (-0.1,-0.) {$1$};
		\node () at (1.1,-0.) {$1$};	
				\node () at (0.5,1.02) {$2$};	
				\node () at (0.6, 0.4) {$2$};
            \end{tikzpicture}\right\rvert
            \begin{tikzpicture}[scale = 1.7, baseline={([yshift=-.5ex]current bounding box.center)},vertex/.style={anchor=base,
    circle,fill=black!25,minimum size=18pt,inner sep=2pt}]
   		 \draw  [dashed] (0,0) -- (0.5, 0.86);
		 \draw [dashed] (0,0) -- (0.5, 0.3);
		 \draw [dashed] (0.5, 0.3) -- (1,0);
		  \draw [dashed] (0.5, 0.3) -- (0.5, 0.86);
   		 \draw  [] (0,0) -- (1,0);    
 		   \draw  [dashed]  (0.5, 0.86) -- (1,0); 
		\node () at (-0.1,-0.) {$0$};
		\node () at (1.1,-0.) {$1$};	
				\node () at (0.5,1.02) {$0$};	
				\node () at (0.6, 0.4) {$0$};
            \end{tikzpicture}
            \right) = 2.
\end{align*}

Corresponding partial orientations are 
\begin{align*}
\mbox{}
  \begin{tikzpicture}[scale = 1.7, baseline={([yshift=-.5ex]current bounding box.center)},vertex/.style={anchor=base,
    circle,fill=black!25,minimum size=18pt,inner sep=2pt}]
   		 \draw  [dashed, ->- ] (0,0) -- (0.5, 0.86);
		 \draw [dashed, -<- ] (0,0) -- (0.5, 0.3);
		 \draw [dashed, -<- ] (0.5, 0.3) -- (1,0);
		  \draw [dashed, -<-] (0.5, 0.3) -- (0.5, 0.86);
   		 \draw  [] (0,0) -- (1,0);    
 		   \draw  [dashed, -<-]  (0.5, 0.86) -- (1,0); 
		\node () at (-0.1,-0.1) {$0$};
		\node () at (1.1,-0.1) {$1$};	
				\node () at (0.5,1.02) {$0$};	
				\node () at (0.6, 0.4) {$0$};
            \end{tikzpicture}
         \mbox{ and }
  \begin{tikzpicture}[scale = 1.7, baseline={([yshift=-.5ex]current bounding box.center)},vertex/.style={anchor=base,
    circle,fill=black!25,minimum size=18pt,inner sep=2pt}]
   		 \draw  [dashed, -<- ] (0,0) -- (0.5, 0.86);
		 \draw [dashed, ->- ] (0,0) -- (0.5, 0.3);
		 \draw [dashed, -<- ] (0.5, 0.3) -- (1,0);
		  \draw [dashed, ->-] (0.5, 0.3) -- (0.5, 0.86);
   		 \draw  [] (0,0) -- (1,0);    
 		   \draw  [dashed, -<-]  (0.5, 0.86) -- (1,0); 
		\node () at (-0.1,-0.1) {$0$};
		\node () at (1.1,-0.1) {$1$};	
				\node () at (0.5,1.02) {$0$};	
				\node () at (0.6, 0.4) {$0$};
            \end{tikzpicture}.
\end{align*}
\end{example}

\begin{remark}
Note that for $\Gamma_1 = \emptyset$ and $D_1 = 0$, where $\emptyset \subset \Gamma$ denotes a generating subgraph with no edges, Definition \ref{relMult} reduces to Definition \ref{multDef}, i.e.
$
\mult(\Gamma, D \mid \emptyset, 0) = \mult(\Gamma, D).
$
\end{remark}
\begin{remark}\label{counter}
Note that a necessary condition for the multiplicity of $D$ along $D_1$ to be nonzero is  $D \geq D_1$. However, this condition is insufficient. For example,
\begin{align*}
   \mult\left( \left.\begin{tikzpicture}[scale = 1.7, baseline={([yshift=-.5ex]current bounding box.center)},vertex/.style={anchor=base,
    circle,fill=black!25,minimum size=18pt,inner sep=2pt}]
   		 \draw  [] (0,0) -- (0.5, 0.86);
		 \draw [] (0,0) -- (0.5, 0.3);
		 \draw (0.5, 0.3) -- (1,0);
		  \draw (0.5, 0.3) -- (0.5, 0.86);
   		 \draw  [] (0,0) -- (1,0);    
 		   \draw  []  (0.5, 0.86) -- (1,0); 
		\node () at (-0.1,-0.1) {$2$};
		\node () at (1.1,-0.1) {$2$};	
				\node () at (0.5,1.02) {$1$};	
				\node () at (0.6, 0.4) {$1$};
            \end{tikzpicture}\right\rvert
            \begin{tikzpicture}[scale = 1.7, baseline={([yshift=-.5ex]current bounding box.center)},vertex/.style={anchor=base,
    circle,fill=black!25,minimum size=18pt,inner sep=2pt}]
   		 \draw  [] (0,0) -- (0.5, 0.86);
		 \draw [dashed] (0,0) -- (0.5, 0.3);
		 \draw [dashed] (0.5, 0.3) -- (1,0);
		  \draw [dashed] (0.5, 0.3) -- (0.5, 0.86);
   		 \draw  [] (0,0) -- (1,0);    
 		   \draw  []  (0.5, 0.86) -- (1,0); 
		\node () at (-0.1,-0.1) {$0$};
		\node () at (1.1,-0.1) {$2$};	
				\node () at (0.5,1.02) {$1$};	
				\node () at (0.6, 0.4) {$0$};
            \end{tikzpicture}
            \right) = 0.
\end{align*}
\end{remark}
\begin{remark}
Another way to define multiplicities is by introducing a poset structure on the set of indegree divisors on generating subgraphs. Let $\Gamma$ be a graph without loops. Consider the set $\mathrm{IN}(\Gamma)$ of pairs $(\Gamma', D)$ where $\Gamma' \subset \Gamma$ is a generating subgraph, and $D \in \BDiv(\Gamma')$ is an indegree divisor on $\Gamma'$. Then the set $\mathrm{IN}(\Gamma)$ has a natural poset structure. Namely, say that $(\Gamma_1, D_1) > (\Gamma_2, D_2)$ if  $\Gamma_1 \supset \Gamma_2$, and there exists an orientation $\mathfrak o \in \pazocal O(\Gamma_1 \setminus \Gamma_2)$ such that $D_1 - D_2 = \indeg(\mathfrak o)$. Let $\pazocal H(\Gamma)$ be the Hasse diagram of the poset $\mathrm{IN}(\Gamma)$. Then multiplicities can be defined by the formula
$$
\mult(\Gamma_1, D_1 \mid \Gamma_2, D_2) = \frac{\mbox{number of directed paths from $(\Gamma_2, D_2)$ to $(\Gamma_1, D_1)$ in } \pazocal H(\Gamma)}{(|E(\Gamma_1)| - |E(\Gamma_2)|)!}.
$$ 
It is worth mentioning that a closely related poset $\overline{\pazocal{OP}_\Gamma}$ was considered in \cite{caporaso2010torelli}. The elements of $\overline{\pazocal{OP}_\Gamma}$ are pairs $(\Gamma', D')$ where $\Gamma' \subset \Gamma$ is a generating subgraph, and $D'$ is a \textit{completely reducible} indegree divisor on $\Gamma'$ (see Section \ref{secRID}). The ordering is defined in the same way as above, i.e. $\overline{\pazocal{OP}_\Gamma}$ is a subposet of the poset $\mathrm{IN}(\Gamma)$. In \cite{Amini}, it is proved that the poset $\overline{\pazocal{OP}_\Gamma}$ is isomorphic to the facet poset of the Voronoi cell associated with the graph $\Gamma$.
\end{remark}
\section{Adjacency of strata}
Now, we would like to understand which pairs of strata $\Str(\Gamma, D)$ are adjacent to each other. First note that  an obvious necessary condition for the closure of the stratum $\Str(\Gamma_1, D_1)$  to contain the stratum $\Str(\Gamma_2, D_2)$ is $ \dim \Str(\Gamma_1, D_1) > \dim \Str(\Gamma_2, D_2)$, or, equivalently, $|E(\Gamma_1)| > |E(\Gamma_2)|$. Moreover, since for any double point $(\lambda, \mu) \in C$ the set $\{L\in \pazocal P^B_C \mid \dim \Ker(P(\lambda) - \mu \cdot \Id) = 2\}$ is closed in $\pazocal P^B_C$, we should in fact have $\Gamma_1 \supset \Gamma_2$. However, the latter condition still does not imply that the closure of  $\Str(\Gamma_1, D_1)$ contains $\Str(\Gamma_2, D_2)$.\par
To formulate a necessary and sufficient condition for a stratum to be contained in the closure of another stratum, let us first discuss in more detail the local structure of the variety $\pazocal P^B_C$ in the neighborhood of a stratum $\Str(\Gamma_2, D_2)$. Theorem \ref{thm2} implies that locally $\pazocal P^B_C$ can be written as
\begin{align*}
(2\Complex^*\sqcup \mbox{a point})^p\times \Complex^q = \bigsqcup_{i=0}^p 2^i\binom{p}{i} (\Complex^*)^i \times \Complex^q.
\end{align*}
where $p = \codim \Str(\Gamma_2 ,D_2)$, and $q = \dim \Str(\Gamma_2 ,D_2)$.
Thus, the neighborhood of the stratum $\Str(\Gamma_2, D_2)$ in the variety $\pazocal P^B_C$ is locally a union of $3^p$ smooth strata whose dimensions vary from $q = \dim \Str(\Gamma_2, D_2)$ to $p+q = \dim \pazocal P^B_C$. Each of these local strata is an open subset of some global stratum $\Str(\Gamma_1, D_1)$.
\begin{definition}
The \textit{multiplicity} of the stratum $\Str(\Gamma_1, D_1)$ along the stratum $\Str(\Gamma_2, D_2)$ is the number of local strata $S$ in the above stratification of the neighborhood of $\Str(\Gamma_2, D_2)$ such that $S \subset \Str(\Gamma_1, D_1)$.
\end{definition}
Clearly, the multiplicity of  $\Str(\Gamma_1, D_1)$ along $\Str(\Gamma_2, D_2)$ does not depend on the choice of a point $P \in \Str(\Gamma_2, D_2)$. In particular, if this multiplicity is positive, then the stratum $\Str(\Gamma_2, D_2)$ lies in the closure of $\Str(\Gamma_1, D_1)$, and if the multiplicity is zero,  then $\Str(\Gamma_2, D_2)$ and the closure of $\Str(\Gamma_1, D_1)$ do not intersect each other. Also note that if the multiplicity of $\Str(\Gamma_1, D_1)$ along $\Str(\Gamma_2, D_2)$ is equal to one, then the closure of $\Str(\Gamma_1, D_1)$ is smooth at points of $\Str(\Gamma_2, D_2)$. Multiplicity bigger than one means that the closure of $\Str(\Gamma_1, D_1)$ is singular along $\Str(\Gamma_2, D_2)$.

\par
The following theorem gives a necessary and sufficient condition for a stratum to be contained in the closure of another stratum, and also allows us to compute multiplicities.
%
%
%
%
%
%
\begin{theorem}\label{thm3} Let $\Str(\Gamma_1, D_1)$ and $\Str(\Gamma_2, D_2)$ be two strata of $\pazocal P^B_C$. Then 
\begin{longenum}
\item $\Str(\Gamma_2, D_2)$ lies in the closure of $\Str(\Gamma_1, D_1)$ if and only if $\Gamma_2 \subset \Gamma_1$ and
$
\mult(\Gamma_1, D_1 \mid \Gamma_2, D_2) > 0;
$
\item if $\Gamma_2 \subset \Gamma_1$, then the multiplicity of the stratum $\Str(\Gamma_1, D_1)$ along the stratum $\Str(\Gamma_2, D_2)$ is equal to the multiplicity $\mult(\Gamma_1, D_1 \mid \Gamma_2, D_2)$.
\end{longenum}
\end{theorem}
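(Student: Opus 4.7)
The plan is to derive both parts of Theorem~\ref{thm3} simultaneously from the local normal form supplied by Theorem~\ref{thm2}. Fix a point $P_0 \in \Str(\Gamma_2, D_2)$ and set $p = |E(\Gamma_C)| - |E(\Gamma_2)| = \codim \Str(\Gamma_2, D_2)$, $q = \dim \Str(\Gamma_2, D_2)$. By Theorem~\ref{thm2}, an analytic neighborhood of $P_0$ in $\pazocal P^B_C$ is biholomorphic to $N^{p} \times \Delta^{q}$, where $N$ is the germ $\{zw = 0\} \subset \Complex^2$ and $\Delta^q$ is a smooth polydisk. The first step is to argue that the $p$ node factors are canonically indexed by the edges $e \in E(\Gamma_C) \setminus E(\Gamma_2)$: each such $e$ corresponds to a node of $C$ at which $P_0$ has a two-dimensional kernel, and the two branches of the node factor $N_e$ are precisely the two ways an isospectral deformation of $P_0$ can reduce this kernel to dimension one.

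It then follows formally that the neighborhood decomposes into $3^p$ smooth local strata, indexed by a choice, for every $e \in E(\Gamma_C) \setminus E(\Gamma_2)$, of either staying at the central point of $N_e$ (keeping $e$ out of the subgraph) or moving onto one of the two branches of $N_e$ (adjoining $e$ to the subgraph). A local stratum is thus recorded by a subset $S \subset E(\Gamma_C) \setminus E(\Gamma_2)$ equipped with a binary label on each $e \in S$, and its underlying subgraph is $\Gamma_2 \cup S$.

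The crucial step is to match the two branches of $N_e$ with the two orientations of $e$ in such a way that the divisor of any $P$ in the resulting local stratum equals $D_2 + \indeg(\mathfrak o)$, where $\mathfrak o$ is the partial orientation of $S$ encoded by the branch labels. I would establish this by tracing through the construction of $D_P$ in Section~\ref{genCons}: resolving the node of $C$ corresponding to $e = [v_i, v_j]$ produces two preimages in the normalization (one on each of $X_i, X_j$, or two preimages on $X_i$ in the loop case), and on each of the two branches of $N_e$ the eigenline map $\psi_P$ extends continuously across exactly one of these preimages. The degree computation of formula~\eqref{dpdef} then contributes $+1$ to $D_P$ precisely at the endpoint toward which the arrow on $e$ is taken to point, which is the defining property of $\indeg$. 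Carrying out this local computation and reconciling it with the global degree bookkeeping of Section~\ref{genCons} is the main technical obstacle; the loop case and the behavior at the points at infinity demand particular attention.

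Granting the branches-to-orientations dictionary, the remainder is purely combinatorial. The local strata near $P_0$ contained in $\Str(\Gamma_1, D_1)$ are in bijection with partial orientations $\mathfrak o$ of the edges of $\Gamma_1 \setminus \Gamma_2$ satisfying $\indeg(\mathfrak o) = D_1 - D_2$, and by Definition~\ref{relMult} there are exactly $\mult(\Gamma_1, D_1 \mid \Gamma_2, D_2)$ such $\mathfrak o$. This proves part~(ii). Part~(i) is then immediate: $\Str(\Gamma_2, D_2)$ lies in $\overline{\Str(\Gamma_1, D_1)}$ if and only if at least one local stratum near $P_0$ is contained in $\Str(\Gamma_1, D_1)$, which occurs if and only if $\Gamma_2 \subset \Gamma_1$ and $\mult(\Gamma_1, D_1 \mid \Gamma_2, D_2) > 0$.
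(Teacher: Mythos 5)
The paper itself gives no proof of Theorem~\ref{thm3} (it defers all detailed proofs elsewhere), but the discussion preceding the theorem --- the $3^p$ local strata arising from $(2\Complex^*\sqcup \mbox{a point})^p\times \Complex^q$ and the definition of multiplicity as a count of local strata --- makes clear that your overall strategy is exactly the intended one. Your reduction of both parts to a dictionary between branches of the local node factors and orientations of the missing edges, followed by the count via Definition~\ref{relMult}, is the right skeleton, and you correctly identify that dictionary as the crux.

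However, the mechanism you offer for the crucial step is wrong as stated, and a second ingredient is asserted rather than justified. First, you claim that on each branch of $N_e$ the eigenline map $\psi_P$ ``extends continuously across exactly one'' of the two preimages of the node. In fact $\psi_P$ always extends holomorphically to all of $X$, for every $P$, precisely because $X$ is smooth (this is stated in Section~\ref{genCons}); when $\dim\Ker(P(\lambda)-\mu\cdot\Id)=1$ at the node, both preimages are simply sent to that one kernel line. What actually distinguishes the two branches is where the unit of degree of $E_P^*$ lands: as Example~\ref{twoLines3} shows for the two-lines curve, the upper- and lower-triangular branches both have $\psi_P$ defined everywhere, but $\deg E_P^*\mid_{X_i}$ is $(0,1)$ on one branch and $(1,0)$ on the other. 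So the statement you need to prove is that crossing from one branch of $N_e$ to the other moves one unit of degree from one endpoint of $e$ to the other, i.e.\ that $D_P-D_2$ restricted to $e$ is $\indeg$ of one of the two orientations of $e$; your proposed route to it does not establish this. Second, Theorem~\ref{thm2} as stated only gives an abstract biholomorphism with $N^p\times\Delta^q$; the claim that the $p$ node factors are canonically labelled by the edges of $\Gamma_C\setminus\Gamma_2$ (with the singular point of $N_e$ cut out by the condition $\dim\Ker=2$ at the corresponding node of $C$, independently across edges) is extra information about that normal form which you would need to extract from its proof, not from its statement. Both points are repairable --- the degree bookkeeping can be done locally as in Example~\ref{twoLines3}, including the loop case --- but as written the central step of your argument does not go through.
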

\begin{example}
Let $C$ be three lines in general position, as in Examples \ref{threeLines} and \ref{threeLinesMult0}. Let also $\Gamma= \Gamma_C$, and let $D = v_1+v_2 +v_3 \in \BDiv(\Gamma)$. We have $\mult(\Gamma, D) = 2$ (see~\eqref{triangleB} and Figure~\ref{triangle}). Therefore, the stratum $\Str(\Gamma, D) $ has multiplicity $2$ along the zero-dimensional stratum $\Str(\emptyset,0) = \diag(a_i+b_i\lambda)$. In other words, $P = \diag(a_i+b_i\lambda)$ is a double point for the closure of $\Str(\Gamma, D) $ (cf. Example~\ref{threeLinesMult0}). Moreover, for any $(\Gamma', D') \neq (\emptyset,0) $, we have $\mult(\Gamma, D \mid \Gamma', D') \leq 1$, which implies that the closure of $\Str(\Gamma, D) $ is smooth at all points except $\diag(a_i+b_i\lambda)$.
\end{example}
\begin{example}
More generally, let $C$ be a union of $n$ straight lines (see Examples~\ref{nLines} and  \ref{Schur}). Then maximal-dimension strata $\Str(\Gamma, D) $ of the isospectral variety $\pazocal P^B_C$ are indexed by weights of the representation of $\gl_n$   with highest weight $\lambda = (n-1, n-2, \dots,1, 0)$, and the multiplicity of a stratum $\Str(\Gamma, D) $ at the point $\diag(a_i+b_i\lambda)$ is equal to the multiplicity of the weight $D$. Also note that, in contrast to the case $n=3$, some of the strata  $\Str(\Gamma, D) $ may have multiplicities  greater than $1$ along strata of positive dimension, see, e.g., Example \ref{mult2example}.
%
%
\end{example}
\begin{remark}
Note that for any $(\Gamma_1, D_1)$ and $(\Gamma_2, D_2)$, one has $\mult(\Gamma_1, D_1 \mid \Gamma_2, D_2) \leq \mult(\Gamma_1, D_1)$. Therefore, the closure of a stratum $\Str(\Gamma_1, D_1) $ is smooth if and only if $\mult(\Gamma_1, D_1) = 1$, or, equivalently, if $\Gamma_1$ has no loops, and $D_1$ is a vertex of the graphical zonotope $Z_{\Gamma_1}$ (see Remark \ref{multOfVert}).
\end{remark}

\begin{remark}
Also note that conditions $\Gamma_2 \subset \Gamma_1$ and $D_2 \leq D_1$ are necessary but not sufficient for the stratum $\Str(\Gamma_2, D_2)$ to belong to the closure of the stratum $\Str(\Gamma_1, D_1)$, see, e.g., Example~\ref{counter}.
\end{remark}
\section{Combinatorial interlude III: Reducible indegree divisors}\label{secRID}

Let $\Gamma$ be a graph. Recall that an orientation $\mathfrak o \in \pazocal O(\Gamma)$ is called \textit{strongly connected} if  any two vertices of the directed graph $(\Gamma, \mathfrak o)$ can be joined by a directed path; an orientation $\mathfrak o \in \pazocal O(\Gamma)$ is called \textit{totally cyclic} if every edge of $(\Gamma, \mathfrak o)$ belongs to a directed cycle. It is well-known that if $\Gamma$ is connected, then an orientation $\mathfrak o \in \pazocal O(\Gamma)$ is totally cyclic if and only if it is strongly connected. On a disconnected graph $\Gamma$, an orientation  $\mathfrak o \in \pazocal O(\Gamma)$  is totally cyclic if and only if its restriction to every connected component of $\Gamma$ is strongly connected.
\begin{proposition}\label{irCond}
Let $D \in \BDiv(\Gamma)$ be an indegree divisor. Then the following conditions are equivalent:
\begin{longenum}
\item there exists no proper non-empty subgraph $\Gamma' \subset \Gamma$ such that the restriction of $D$ to $\Gamma'$ is an indegree divisor;
\item for every proper non-empty subgraph $\Gamma' \subset \Gamma$, the degree of the restriction of $D$ to $\Gamma'$ is strictly bigger than the number of edges of $\Gamma'$;
\item there exists a strongly connected orientation $\mathfrak o \in \pazocal O(\Gamma)$ such that  $D = \indeg(\mathfrak o)$;
\item the graph $\Gamma$ is connected, and $D$ is an interior point\footnote{Note that if $Z_\Gamma$ is just one point, then we regard this point as an interior one.} of the graphical zonotope  $ Z_\Gamma$.
\end{longenum}
\end{proposition}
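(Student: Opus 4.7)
The plan is to establish the chain of equivalences $(i) \Leftrightarrow (ii) \Leftrightarrow (iii) \Leftrightarrow (iv)$. Note at the outset that each of these four conditions implicitly forces $\Gamma$ to be connected: if $\Gamma$ were disconnected, then letting $\Gamma_1$ be any one connected component, in any orientation $\mathfrak{o}$ realizing $D$ every edge of $\Gamma_1$ contributes to an indegree inside $V(\Gamma_1)$ while no other edges do, so $\deg(D|_{\Gamma_1}) = |E(\Gamma_1)|$---immediately falsifying (ii) and making $D|_{\Gamma_1}$ an indegree divisor on $\Gamma_1$ in violation of (i); conditions (iii) and (iv) require connectedness by definition. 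I would state this reduction once and then assume $\Gamma$ is connected throughout.

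For $(i) \Leftrightarrow (ii)$, I would apply Proposition \ref{balDef2} to $D|_{\Gamma'}$ viewed as a divisor on $\Gamma'$: all the sub-inequalities required by that proposition are automatically inherited from the sub-inequalities for $D$ on $\Gamma$, so $D|_{\Gamma'}$ is itself an indegree divisor on $\Gamma'$ if and only if the single equality $\deg(D|_{\Gamma'}) = |E(\Gamma')|$ holds. Since Proposition \ref{balDef2} already gives $\deg(D|_{\Gamma'}) \geq |E(\Gamma')|$ for every subgraph of $\Gamma$, the strict inequality required by (ii) is logically identical to the absence of such an equality for proper non-empty $\Gamma'$, which is exactly (i).

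For $(ii) \Leftrightarrow (iii)$, I would fix any orientation $\mathfrak{o}$ realizing $D$ and use the elementary cut identity
\[
\deg(D|_S) - |E(\Gamma[S])| \;=\; \#\{e \in E(\Gamma) : \mathfrak{o} \text{ orients } e \text{ from } V \setminus S \text{ into } S\}
\]
valid for any $S \subseteq V(\Gamma)$. Since the induced subgraph $\Gamma[S]$ has the most edges among subgraphs on a given vertex set, it is the extremal case in (ii), which therefore translates to the cut condition that for every proper non-empty $S \subsetneq V(\Gamma)$ at least one edge of $\mathfrak{o}$ is oriented from $V \setminus S$ into $S$. This is the classical combinatorial characterization of strong connectivity of $\mathfrak{o}$: in one direction any directed path from $V \setminus S$ to $S$ contains such a crossing edge, and in the other direction taking $S$ to be the reachability set of a fixed vertex immediately yields strong connectivity from the cut condition.

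The most delicate step is $(iii) \Leftrightarrow (iv)$, for which I would exploit the duality between orientations realizing $D$ and supporting hyperplanes of the zonotope $Z_\Gamma$. Writing a linear functional on $\Div(\Gamma) \otimes \R$ as an assignment $v \mapsto \ell(v)$ on vertices, the point $D = \indeg(\mathfrak{o})$ maximizes $\ell$ on $Z_\Gamma$ if and only if $\ell$ is non-decreasing along every directed edge of $\mathfrak{o}$; this follows at once from the Minkowski-sum description of $Z_\Gamma$, since each segment $[v_i, v_j]$ contributes either $\ell(v_i)$ or $\ell(v_j)$ independently and the head of the edge in $\mathfrak{o}$ must realize the maximum. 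Assuming (iii), directed paths in both directions between any two vertices force any such $\ell$ to be constant on $V(\Gamma)$, so $D$ lies on no proper face of $Z_\Gamma$ and hence in its relative interior. Conversely, if some orientation $\mathfrak{o}$ realizing $D$ were not strongly connected, the DAG of its strongly connected components would have at least two elements, and any strictly order-preserving function on this DAG lifts to a non-constant $\ell$ on $V(\Gamma)$ that is non-decreasing along $\mathfrak{o}$, placing $D$ on a proper face and contradicting (iv). The main subtlety is to interpret ``interior'' as the relative interior inside the affine hull $\{D : |D| = |E(\Gamma)|\}$, since $Z_\Gamma$ is never full-dimensional in $\Div(\Gamma) \otimes \R$, and to handle cleanly the degenerate case flagged by the footnote in which $Z_\Gamma$ reduces to a single point.
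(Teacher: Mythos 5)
The paper states Proposition \ref{irCond} without proof (all detailed proofs are deferred to a separate publication), so there is no in-paper argument to compare yours against; judged on its own, your proof is correct and essentially complete. The reduction to connected $\Gamma$, the derivation of (i)$\Leftrightarrow$(ii) from Proposition \ref{balDef2} by observing that all sub-inequalities for $D|_{\Gamma'}$ are inherited from those for $D$, the cut identity linking (ii) to the existence of an in-crossing edge for every proper non-empty vertex subset, and the identification of supporting functionals of $Z_\Gamma$ with functions non-decreasing along an orientation realizing $D$ (which works because $\ell(D)$ depends only on $D$ and not on the chosen orientation) are all sound, and they accommodate loops and parallel edges. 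One trivial slip in the (ii)$\Rightarrow$(iii) direction: to extract strong connectivity from the cut condition you should apply it to the \emph{complement} of the reachability set of a fixed vertex (equivalently, to the set of vertices that can reach it), not to the reachability set itself --- an edge entering the reachability set from outside yields no contradiction, whereas an edge leaving it would contradict maximality of that set. This is a one-word repair and does not affect the validity of the argument; the handling of the relative-interior subtlety and of the degenerate case where $Z_\Gamma$ is a point is also done correctly.
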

\begin{definition}
Let $D \in \BDiv(\Gamma)$ be an indegree divisor. If $D$ satisfies conditions a)-d) listed in Proposition \ref{irCond}, then $D$ is called \textit{irreducible}. Otherwise, $D$ is called \textit{reducible}.
\end{definition}
\begin{proposition}\label{crCond}
Let $D \in \BDiv(\Gamma)$ be an indegree divisor. Then the following conditions are equivalent:
\begin{longenum}
\item the restriction of $D$ to every connected component of $\Gamma$ is an irreducible divisor\footnote{Note that a restriction of an indegree divisor to a connected component is automatically an indegree divisor.};
\item there exists a totally cyclic orientation $\mathfrak o \in \pazocal O(\Gamma)$ such that  $D = \indeg(\mathfrak o)$;
\item $D$ is an interior point of the graphical zonotope $Z_\Gamma$.
\end{longenum}
\end{proposition}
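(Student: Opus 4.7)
The plan is to deduce Proposition \ref{crCond} from Proposition \ref{irCond} by systematically reducing every statement to the connected-component level. The key observation that drives all three equivalences is that all of the relevant structures---the set of indegree divisors, the space of orientations, and the graphical zonotope itself---respect the decomposition of $\Gamma$ into its connected components $\Gamma_1, \dots, \Gamma_k$. In particular, writing $D = D_1 + \cdots + D_k$ where $D_i$ is the restriction of $D$ to $\Gamma_i$, one has $D \in \BDiv(\Gamma)$ if and only if $D_i \in \BDiv(\Gamma_i)$ for each $i$, because the indegree at any vertex depends only on the orientation of edges incident to it, and no edges cross between components.

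For the equivalence (a)$\Leftrightarrow$(b), I would invoke directly the fact recalled in the text that an orientation $\mathfrak o$ is totally cyclic precisely when its restriction to each connected component is strongly connected. Given (a), Proposition \ref{irCond}(iii) furnishes, for each $i$, a strongly connected orientation $\mathfrak o_i$ of $\Gamma_i$ with $\indeg(\mathfrak o_i) = D_i$; their disjoint union is a totally cyclic orientation of $\Gamma$ with indegree $D$, giving (b). Conversely, from a totally cyclic $\mathfrak o$ on $\Gamma$ realizing $D$, the restriction to each $\Gamma_i$ is strongly connected and realizes $D_i$, so Proposition \ref{irCond}(iii) certifies that each $D_i$ is irreducible, giving (a).

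For the equivalence (a)$\Leftrightarrow$(c), the essential point is that the graphical zonotope factors as a product. Since every edge $[v_a, v_b]$ of $\Gamma$ lies within a single connected component, the Minkowski sum defining $Z_\Gamma$ splits: under the natural decomposition
\[
\Div(\Gamma)\otimes\R = \bigoplus_{i=1}^k \Div(\Gamma_i)\otimes\R,
\]
one has $Z_\Gamma = Z_{\Gamma_1} \times \cdots \times Z_{\Gamma_k}$. A point $D = (D_1, \dots, D_k)$ is in the interior of this product (relative to the ambient space) if and only if each $D_i$ is in the interior of $Z_{\Gamma_i}$ (relative to its own ambient space). Now applying Proposition \ref{irCond}(iv) to each connected component $\Gamma_i$ translates this componentwise interior condition into the irreducibility of each $D_i$, which is exactly (a).

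The main obstacle, though a minor one, is the careful handling of the interior in (c): one must be clear about which ambient affine space the interior is taken in, and verify that the zonotope of a disjoint union literally is the Cartesian product of the pieces rather than merely a Minkowski sum inside a single degenerate subspace. Once this product decomposition is in place, the proposition follows by combining it with Proposition \ref{irCond} applied to each $\Gamma_i$, and no new combinatorial input is required.
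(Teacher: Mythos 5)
Your argument is correct and is essentially the route the paper intends: the paper omits an explicit proof, but the component-wise characterization of totally cyclic orientations that it recalls at the start of Section 8 is exactly the reduction you use, and combining it with Proposition \ref{irCond} applied to each connected component is the natural (and evidently intended) derivation. Your flagged subtlety about the interior is handled correctly by reading ``interior'' as relative interior (as the footnote to Proposition \ref{irCond} on one-point zonotopes already forces) and by noting that the relative interior of the product $Z_{\Gamma_1}\times\cdots\times Z_{\Gamma_k}$ is the product of the relative interiors.
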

\begin{definition}
Let $D \in \BDiv(\Gamma)$ be an indegree divisor. If $D$ satisfies conditions a)-c) listed in Proposition \ref{crCond}, then $D$ is called \textit{completely reducible}. 
\end{definition}
\section{Completely reducible matrix polynomials and compactified Jacobians}
\begin{definition}
A matrix polynomial $P(\lambda)$ is called \textit{reducible} if there exists a proper non-empty subspace $W \subset \Complex^n$ which is an invariant under the action of the matrix $P(\lambda)$ for every $\lambda \in \Complex$. A matrix polynomial $P(\lambda)$ is called \textit{completely reducible} if every such invariant subspace $W \subset \Complex^n$ admits an invariant complement.
\end{definition}
Note that if $P(\lambda)$ is reducible, then so is its spectral curve. However, the converse is in general not true. For example, matrix polynomials which belong to the stratum $\Str(\Gamma_C, v_1 + v_2 + v_3)$ in Example \ref{threeLines} are irreducible, though the corresponding spectral curve is a union of three lines. It turns out that a necessary and sufficient condition for reducibility of $P$ can be given in terms of the associated graph $\Gamma_P$ and the divisor $D_P$:
\begin{theorem}\label{thm4}
Let $P \in \Str(\Gamma, D)$ be a matrix polynomial. Then $P$ is reducible if and only if $D$ is a reducible divisor on $\Gamma$. Similarly, $P$ is completely reducible if and only if $D$ is completely reducible.
\end{theorem}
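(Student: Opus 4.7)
The plan is to match the two notions of reducibility through the eigenvector-bundle description used in Theorem~\ref{thm1}. The forward direction translates an invariant subspace of $P$ into a subset of irreducible components of $C$ on which the divisor data restricts to an indegree divisor, and the converse reconstructs such a subspace from that combinatorial data.

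For the forward direction, suppose $P\in\Str(\Gamma,D)$ has a proper non-empty invariant subspace $W$. Since every coefficient of $P$, and in particular the diagonal matrix $B=A_m$ with distinct eigenvalues, must preserve $W$, the subspace $W$ is necessarily a coordinate subspace $\mathrm{span}(e_i)_{i\in I}$ for some $I\subsetneq\{1,\dots,n\}$. By the asymptotic condition~\eqref{specCond}, $I$ corresponds to a proper non-empty subset $V'\subsetneq V(\Gamma_C)$ of irreducible components, namely those whose branches at infinity are governed by the eigenvalues $\{b_i\}_{i\in I}$. I would then verify three compatibilities: (a) $P|_W$ is a matrix polynomial with leading term $B|_W$ and spectral curve $C_{V'}$; (b) its eigenvector bundle on $X_{V'}$ coincides with the restriction of $E_P$, so its associated divisor is $D|_{V'}$; and (c) at a node internal to $V'$, the kernel of $P(\lambda_0)-\mu_0$ in $\Complex^n$ is one-dimensional iff its kernel in $W$ is one-dimensional, so the graph of $P|_W$ equals the induced subgraph $\Gamma|_{V'}$. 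Applying Theorem~\ref{thm1}(i) to $P|_W$ then gives $D|_{V'}\in\BDiv(\Gamma|_{V'})$, so Proposition~\ref{irCond}(i) shows that $D$ is reducible.

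For the converse, fix $V'\subsetneq V$ non-empty with $D|_{V'}\in\BDiv(\Gamma|_{V'})$, and use the identification of $\Str(\Gamma,D)$ with an open subset of $\Jac(C_\Gamma/\infty)$ from Theorem~\ref{thm1} to represent $P$ by a line bundle $L$ on a partial normalization of $C$ whose multi-degree encodes $D$. The indegree condition on $V'$ is precisely what is needed for $L|_{C_{V'}}$ to correspond, via the same theorem, to a matrix polynomial $P'$ of size $\sum_{v_i\in V'}n_i$ with leading term $B|_W$, where $W$ is the coordinate subspace cut out by $V'$ via~\eqref{specCond}. To identify $P'$ with $P|_W$, I would push the short exact sequence $0\to L(-C_{V'})\to L\to L|_{C_{V'}}\to 0$ forward along the $\mu$-projection to $\Complex_\lambda$ and match the resulting sub/quotient structure with the block-triangular decomposition of $P$ along $W$. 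For complete reducibility, the key observation is that $P$ is a direct sum of blocks iff at every node joining two distinct blocks the two eigenvector branches lie in transverse coordinate summands — hence span a two-dimensional kernel there — so $\Gamma$ decomposes as the disjoint union of the block-graphs and $D$ restricts on each connected component to an irreducible indegree divisor (by the reducible case just proved); this is exactly Proposition~\ref{crCond}(i), and the converse runs block by block using the reducible implication.

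The main obstacle is the converse direction: verifying that the $P'$ built from $L|_{C_{V'}}$ really is the restriction $P|_W$ and not merely an independent matrix polynomial sharing the same spectral data. This requires tracing how the canonical sections that recover $P$ from $L$ restrict compatibly to those recovering $P|_W$ from $L|_{C_{V'}}$, and verifying that the eigenvector lines over the components $X_{V\setminus V'}$ genuinely lie outside $W$ — both of which reduce to the same node-by-node local analysis that underpins the proofs of Theorems~\ref{thm1} and~\ref{thm2}.
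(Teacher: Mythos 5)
The paper defers all detailed proofs to a future publication, so there is no proof of Theorem~\ref{thm4} to compare against; I can only assess your argument on its own terms. Your forward direction is essentially correct and complete in outline: invariance of $W$ under $P(\lambda)$ for all $\lambda$ forces invariance under every coefficient, in particular under $B$, so $W$ is a coordinate subspace; the characteristic polynomial of $P|_W$ divides that of $P$, so the spectral curve of $P|_W$ is $C_{V'}$ for a proper non-empty set $V'$ of components, and condition~\eqref{specCond} matches the index set of $W$ with the points at infinity of $C_{V'}$. Your points (b) and (c) do hold — at a node internal to $V'$ the induced map on $\Complex^n/W$ minus $\mu_0$ is invertible, so $\Ker(P(\lambda_0)-\mu_0\cdot\Id)\subset W$ — and Theorem~\ref{thm1}(i) applied to $P|_W\in\pazocal P^{B|_W}_{C_{V'}}$ then gives $D|_{V'}\in\BDiv(\Gamma|_{V'})$, i.e.\ $D$ is reducible by Proposition~\ref{irCond}(i). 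The reduction of the complete-reducibility statement to the reducible case via the block decomposition and Proposition~\ref{crCond}(i) is also the right move.

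The genuine gap is in the converse, and it is not only the technical obstacle you flag but a logical one: your plan to ``identify $P'$ with $P|_W$'' and to ``match the sub/quotient structure with the block-triangular decomposition of $P$ along $W$'' presupposes that $W$ is $P$-invariant, which is exactly what this direction must prove; the restriction $P|_W$ is not even defined until then. What is missing is the mechanism by which the numerical condition $\deg(D|_{V'})=|E(\Gamma|_{V'})|$ forces the off-diagonal block of $P$ carrying $W_{V'}$ outside itself to vanish. The natural route is close to what you gesture at but runs through the other subsheaf: the span of the eigenlines $\psi_P(u)$ over $X_{V'}$ is the generic fibre of the pushforward of the subsheaf of sections vanishing on the \emph{complementary} subcurve (your $L(-C_{V'})$ points at the wrong block), and one must show, by a degree count using $\deg E_P^*|_{X_i}=\mathrm{genus}(X_i)+n_i-1+D(v_i)$ and the identifications at the nodes of $\Gamma|_{V'}$, that this span has dimension exactly $\sum_{v_i\in V'}n_i$, hence equals the coordinate subspace $W_{V'}$ and is invariant. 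Without this step the converse is unsupported. A secondary gap: Theorem~\ref{thm1} identifies a stratum with an open \emph{dense} subset of a Jacobian, so the claim that $L|_{C_{V'}}$ ``corresponds to a matrix polynomial via the same theorem'' requires checking that it lands in that open subset — an issue that disappears entirely if the invariance of $W_{V'}$ is established directly as above.
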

Let us introduce the set $$ (\pazocal P_C^B)_{\mathrm{cr}} :=  \{P(\lambda) \in  \pazocal P_C^B \mid P(\lambda) \mbox{ completely reducible}\}.$$ By Theorem \ref{thm4}, we have \begin{align}\label{crStrat} (\pazocal P_C^B)_{\mathrm{cr}} = \bigsqcup\nolimits_{\Gamma, D} \Str(\Gamma,D)\end{align} where $\Gamma$ is a generating subgraph of the dual graph $\Gamma_C$, and $D$ is a completely reducible indegree divisor on $\Gamma$. 
\begin{example}
Let $C$ be three lines, as in Example \ref{threeLines}. Then            $$
          (\pazocal P_C^B)_{\mathrm{cr}} = \Str\left(\begin{tikzpicture}[scale = 1, baseline={([yshift=-.5ex]current bounding box.center)},vertex/.style={anchor=base,
    circle,fill=black!25,minimum size=18pt,inner sep=2pt}]
   		 \draw   (0,0) -- (0.5, 0.86);
   		 \draw   (0,0) -- (1,0);    
 		   \draw    (0.5, 0.86) -- (1,0); 
		\node () at (-0.15,-0.1) {$1$};
		\node () at (1.15,-0.1) {$1$};	
				\node () at (0.5,1.1) {$1$};
            \end{tikzpicture}
 \right) \,\sqcup\, \Str\left(         \begin{tikzpicture}[scale = 1, baseline={([yshift=-.5ex]current bounding box.center)},vertex/.style={anchor=base,
    circle,fill=black!25,minimum size=18pt,inner sep=2pt}]
   		 \draw [dashed]  (0,0) -- (0.5, 0.86);
   		 \draw [dashed]  (0,0) -- (1,0);    
 		   \draw  [dashed]  (0.5, 0.86) -- (1,0); 
		\node () at (-0.15,-0.1) {$0$};
		\node () at (1.15,-0.1) {$0$};	
				\node () at (0.5,1.1) {$0$};
            \end{tikzpicture}\right),
            $$
            cf. the explicit description in Example \ref{threeLines}.

\end{example}
Now, we compare stratification \eqref{crStrat} with the stratification of the \textit{canonical compactified Jacobian} introduced by Alexeev. In  \cite{Alexeev} Alexeev showed that if $C$ is a nodal, possibly reducible curve, then in degree $g-1$, where $g$ is the arithmetic genus of $C$, there exists a canonical way of compactifying the generalized Jacobian. This canonical compactified Jacobian has a stratification that can be written in our terms as
\begin{align}\label{ccj}
\overline{\Jac}_{g-1}(C) =\bigsqcup\nolimits_{\Gamma,D} \Jac(C_\Gamma)
\end{align}
where $\Gamma$ is a generating subgraph of the dual graph $\Gamma_C$, $D$ is a completely reducible indegree divisor on $\Gamma$, and, as above, $C_\Gamma$ is the curve obtained from $C$ by resolving all nodes which correspond to edges not in $\Gamma$. \par
Thus, from the combinatorial point of view, stratifications \eqref{crStrat} and \eqref{ccj} coincide. However, the strata themselves are different: each stratum of  \eqref{crStrat} is biholomorphic to an open subset in the Jacobian $\Jac(C_\Gamma \, / \, \infty)$, while strata of  \eqref{ccj} are Jacobians $\Jac(C_\Gamma)$. To pass from $\Jac(C_\Gamma \, / \, \infty)$ to $\Jac(C_\Gamma)$, we should take the quotient  of $\Str(\Gamma,D)$ with respect to the conjugation action of the centralizer $G^B$ of $B$ (see Theorem \ref{thm1}).
%
This observation leads to the following conjecture:
\begin{conjecture}\label{conjCCJ}
Assume that $C$ is a nodal curve satisfying condition \eqref{specCond}. Then the moduli space $$  (\pazocal P_C^B)_{\mathrm{cr}} / G^B =  \{P \in \pazocal P_{m,n} \mid P \mbox{ is completely reducible},  C_P = C  \} \,/\,\GL_n(\Complex)$$ of \textit{completely reducible} $n \times n$ degree $m$ matrix polynomials with spectral curve~$C$, considered up to conjugation by constant matrices, is a variety isomorphic to an open dense subset in the canonical compactified Jacobian of $C$.
\end{conjecture}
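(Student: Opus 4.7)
The plan is to build a natural morphism $\phi \colon (\pazocal P_C^B)_{\mathrm{cr}} / G^B \to \overline{\Jac}_{g-1}(C)$ and show that it is an open embedding with dense image. The key point is that the indexing sets of the two stratifications \eqref{crStrat} and \eqref{ccj} already coincide by virtue of Theorem~\ref{thm4}, and stratum-by-stratum the identifications exist by Theorem~\ref{thm1}(iii): for each pair $(\Gamma, D)$ with $D$ a completely reducible indegree divisor, $\Str(\Gamma, D)/G^B$ is an open dense subset of $\Jac(C_\Gamma)$, which is exactly the Alexeev stratum labelled by $(\Gamma, D)$. The content of the conjecture is therefore that these stratum-wise isomorphisms assemble into a morphism of varieties.

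To define $\phi$ pointwise I would use the eigenvector construction of Section~\ref{genCons}. Given $P \in \Str(\Gamma,D)$ with $D$ completely reducible, let $\iota \colon C_\Gamma \to C$ be the partial normalization at the nodes corresponding to $E(\Gamma_C)\setminus E(\Gamma)$, and set $\phi([P]) := \iota_*(E_P^*)$. Complete reducibility of $D$ (equivalently of $P$, by Theorem~\ref{thm4}) ensures that $\iota_*(E_P^*)$ is a torsion-free rank-one sheaf whose non-invertibility locus is precisely the collapsed nodes, and formula \eqref{totalDegreeFormula} together with the arithmetic genus computation gives total degree $g-1$, placing $\phi([P])$ in the intended Alexeev stratum. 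Invariance under $G^B$ is immediate: conjugation preserves the projective eigenspaces cut out inside $\mathbb{P}^{n-1}$ by the values $P(\lambda)$.

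To promote $\phi$ from a set map to a morphism, I would build a universal family of torsion-free rank-one sheaves over $(\pazocal P_C^B)_{\mathrm{cr}}$ by doing the eigenvector construction in families, descend it to the $G^B$-quotient, and feed it into Alexeev's moduli-theoretic universal property for $\overline{\Jac}_{g-1}(C)$. Injectivity is already stratum-wise. For openness and density I would compare dimensions (both top strata have dimension $g$) and combinatorial incidences: Theorem~\ref{thm3} describes adjacencies of the strata $\Str(\Gamma,D)$, and Alexeev's description of boundary strata of $\overline{\Jac}_{g-1}(C)$ via normalized semistable multidegrees matches this combinatorics by construction, since normalized semistable multidegrees are exactly indegree divisors. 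Density then follows because the Gavrilov stratum $\Str(\Gamma_C,D)/G^B$ is open and dense in $\Jac(C)$ inside the corresponding Alexeev stratum.

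The hard part, in my view, is verifying that $\phi$ is continuous across the stratum boundaries in the sense required to obtain an actual morphism. Concretely: given a one-parameter degeneration $P_t \in \Str(\Gamma_1, D_1)$ with flat limit $P_0 \in \Str(\Gamma_2, D_2) \subset \overline{\Str(\Gamma_1, D_1)}$, one must show that the limit of $\iota_{1*}(E_{P_t}^*)$ inside $\overline{\Jac}_{g-1}(C)$ is $\iota_{2*}(E_{P_0}^*)$, i.e. that sections of the eigenvector bundle degenerate to produce precisely the prescribed torsion-free (non-locally free) behavior at the newly formed nodes. The local-analytic node-times-disk description of $\pazocal P_C^B$ near $P_0$ supplied by Theorem~\ref{thm2} should give this limit explicitly. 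Once the local analysis is in place, separatedness of $\overline{\Jac}_{g-1}(C)$ upgrades the set-theoretic $\phi$ to a morphism, and the combined stratum-wise isomorphism and adjacency match then yield an isomorphism onto an open dense subset, proving the conjecture.
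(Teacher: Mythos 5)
The statement you are addressing is stated in the paper as a \emph{conjecture}: the paper offers no proof of it, only the motivating observation that the index sets of the stratifications \eqref{crStrat} and \eqref{ccj} coincide (via Theorem \ref{thm4}) and that each stratum $\Str(\Gamma,D)/G^B$ matches the corresponding Alexeev stratum $\Jac(C_\Gamma)$ up to an open dense subset (via Theorem \ref{thm1}). Your proposal reproduces exactly this observation as its starting point, which is fine, but it does not go beyond it in any substantive way. You yourself identify the crux --- that the stratum-wise identifications must assemble into a single morphism, i.e.\ that for a degeneration $P_t \in \Str(\Gamma_1,D_1)$ with limit $P_0 \in \Str(\Gamma_2,D_2)$ the limit of the sheaves $\iota_{1*}(E_{P_t}^*)$ in $\overline{\Jac}_{g-1}(C)$ is the sheaf attached to $P_0$ --- and then defer it, asserting that Theorem \ref{thm2} ``should give this limit explicitly.'' That deferred step is precisely the content of the conjecture beyond what the paper already proves, so what you have written is a plan for a proof rather than a proof; the gap is genuine and is not closed by anything in the paper.

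Two further points deserve attention before the plan could be executed. First, the degree bookkeeping in your definition of $\phi$ is off: by \eqref{totalDegreeFormula} the total degree of $E_P^*$ is $\sum_i(\mathrm{genus}(X_i)-1)+|E(\Gamma)|+n$, which equals $g(C_\Gamma)-1+n$ rather than $g-1$, so one must twist by a degree-$n$ divisor (naturally, the points at infinity --- this is the same mechanism by which Theorem \ref{thm1} passes from $\Jac(C_\Gamma/\infty)$ to $\Jac(C_\Gamma)$ upon quotienting by $G^B$) and also account for the change of degree and of arithmetic genus under the pushforward $\iota_*$. Second, the conjecture asserts in particular that $(\pazocal P_C^B)_{\mathrm{cr}}/G^B$ \emph{is a variety}; since the paper shows (Example \ref{twoLines}) that the full quotient $\pazocal P^B_C/G^B$ can fail to be Hausdorff, the separatedness of the completely reducible locus cannot be taken for granted and is part of what must be proved, not merely a consequence of mapping to the separated space $\overline{\Jac}_{g-1}(C)$ --- unless you first establish that your set-theoretic $\phi$ is injective and a local homeomorphism on the quotient, which again hinges on the unproved gluing step.
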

\bibliographystyle{plain}
\bibliography{MatPolyV3}
\end{document}